\newcommand{\vertiii}[1]{{\left\vert\kern-0.25ex\left\vert\kern-0.25ex\left\vert #1\right\vert\kern-0.25ex\right\vert\kern-0.25ex\right\vert}}
\newtheorem{theorem}{Theorem}
\newtheorem{corollary}[theorem]{Corollary}
\newtheorem{lemma}[theorem]{Lemma}
\newdefinition{definition}{Definition}
\newproof{proof}{Proof}
\begin{document}

\title{Unconditionally energy stable and first-order accurate numerical schemes for the heat equation with uncertain temperature-dependent conductivity}

\author[nswc]{J. A. Fiordilino\corref{cor1}}
\ead{joseph.a.fiordilino1@navy.mil}
\author[nswc,csb]{M. Winger}
\ead{matthew.winger@navy.mil}

\cortext[cor1]{Corresponding author}
\address[nswc]{Naval Surface Warfare Center Corona Division, Measurement Science and Engineering Department, Corona, CA 92860.}
\address[csb]{California State University Long Beach, Department of Mathematics and Statistics, Long Beach, CA 90815.}

\begin{abstract}
In this paper, we present first-order accurate numerical methods for solution of the heat equation with uncertain temperature-dependent thermal conductivity.  Each algorithm yields a shared coefficient matrix for the ensemble set improving computational efficiency.  Both mixed and Robin-type boundary conditions are treated.  In contrast with alternative, related methodologies, stability and convergence are unconditional.  In particular, we prove unconditional, energy stability and optimal-order error estimates. A battery of numerical tests are presented to illustrate both the theory and application of these algorithms.
\end{abstract}

\begin{keyword}
Time-stepping, finite element method, heat equation, temperature-dependent thermal conductivity, uncertainty quantification
\end{keyword}

\maketitle
\section{Introduction}
Demand for superior predictions of scientific and engineering problems is ever increasing.  Improvement of available computational resources and both development and application of numerical methodologies work synergistically to meet the aforementioned demand.  In particular, numerical schemes are devised to improve model accuracy (e.g., via inclusion of additional physics), replicate additional properties of the continuous problem (e.g., long-time stability), incorporate uncertainty quantification via statistical techniques, etc.  The focus of this manuscript is on improving the efficiency of ensemble simulations, which facilitate uncertainty quantification, applied to heat conduction dynamics with increased model physics.

The crisis of predictability in numerical weather prediction, led to the discovery of chaos and the use of ensemble simulations to produce predictive results with uncertainty quantified.  Some key figures include, Charney \cite{Charney}, Philips \cite{Philips}, Thompson \cite{Thompson}, Lorenz \cite{Lorenz,Lorenz2,Lorenz3}; see, e.g., \cite{Kalnay,Lewis} and references therein for a historical perspective.  Ensemble calculations typically involve $J$ solves of a set of equations with slightly perturbed initial data.  Calculations are performed as either $J$ sequential, fine mesh runs or $J$ parallel, coarse mesh runs of a given code.  The ensemble average tends to perform better as a prediction than any of the individual realizations; see, e.g., Chapter 6 Section 5 of \cite{Kalnay} or \cite{Atger,Fritsch,Kalnay2}.  Evidently, increased computational resources are needed over a single realization run.  Moreover, since both increased ensemble size $J$ and mesh density $h$ yield superior results, there is an inherent competition for available computational resources.

The last six years have seen increased focus in improving efficiency of ensemble calculations \cite{Jiang, Jiang2, Jiang3, Jiang4, Mohebujjaman, Takhirov, Gunzburger, Gunzburger2, Gunzburger3, Gunzburger4, Fiordilino, Fiordilino2, Fiordilino3, Fiordilino4, Li, Chen, Ju, Luo, Luo2,Sakthivel} and references therein. The driver for much of this work is owed to a breakthrough work by Jiang and Layton \cite{Jiang}, as applied to non-isothermal fluid flow.  Therein, they recognized that a consistent modification of the convective term, utilizing the ensemble mean and fluctuation of the viscosity together with lagging of the fluctuation term, would yield a shared coefficient matrix independent of the ensemble member $j$.  The result was a reduction in both storage requirements and solution turnover time.

Recent years have seen increased focus towards problems with uncertain parameters and considerations of alternative physics.  Of particular interest here, first- and second-order ensemble algorithms for iso-thermal fluid flow with constant viscosity were developed in \cite{Gunzburger, Gunzburger2}.  Further, first-order methods were presented for the heat equation with constant thermal conductivity under mixed boundary conditions in \cite{Sakthivel} and both space and time dependent thermal conductivity under Dirichlet boundary conditions in \cite{Luo}.  Moreover, first and second-order methods were developed for spatially dependent thermal conductivities in \cite{Fiordilino4}.  Notably, stochastics were incorporated in \cite{Luo,Luo2} via the Monte Carlo method and in \cite{Li} for the convection-diffusion equation via stochastic collocation.  

In each of the above works, both stability and convergence were conditionally dependent on the ratio between the fluctuating and mean values of the relevant parameter.  In contrast, the ensemble methods presented herein are unconditionally, nonlinearly, energy stable and first-order accurate, with $\Delta t = \mathcal{O}(h)$.  Moreover, we consider the heat equation with uncertain temperature-dependent thermal conductivity due to uncertain initial conditions.  Physically, this is more realistic as most materials' thermal conductivity exhibit non-trivial temperature-dependence.  Mathematically, the resulting equation becomes nonlinear, in the diffusive term, presenting new challenges over the analogous linear problem.

\ \ \ \  Let $\Omega \subset \mathbb{R}^{d}$ be an open, bounded, Lipschitz domain.  Given initial temperature $T^0(x)=T(x,0)$, thermal conductivity $\kappa$ and heat source $f$, find $T(x,t): \Omega \times (0,t^*]\rightarrow R$ satisfying
\begin{align}\label{1.01}
T_t-\nabla \cdot(\kappa\nabla T)=f\  in \ \Omega.
\end{align}
We consider two boundary configurations: mixed and Robin. Throughout, $\kappa$ is the thermal conductivity of the solid medium which depends on the temperature profile; that is, $\kappa \equiv \kappa(T)$.  For the mixed boundary condition, the boundary $\partial \Omega$ is partitioned such that $\partial\Omega=\overline{\Gamma_D}\bigcup\overline{\Gamma_N}$ with $\Gamma_D\bigcap\Gamma_N=0$ ($\Gamma_D$ for Dirichlet condition and $\Gamma_N$ for Neumann condition).  Let $n$ denote the outward normal, then
\begin{align}\label{1.02}
T=0\ on\ \Gamma_D,\ \nabla T\cdot n=0\ on \ \Gamma_N.
\end{align}
Moreover, the Robin condition is prescribed via
\begin{align}\label{1.03}
\alpha T+\kappa \nabla T\cdot n=\beta\  on\ \partial \Omega,
\end{align}
where $\alpha \in [0,1]$ is the emissivity, and $\beta$ a prescribed function on the boundary.

The paper is organized as follows. In Section 2, we introduce mathematical preliminaries required in the analysis, including semi-discrete numerical schemes and finite element preliminaries. The fully discrete schemes are introduced in Section 3. Sections 4 and 5 are devoted to the stability and error analysis of the fully discrete algorithms. Results from a battery of numerical tests are provided in Section 6.  These serve to illustrate the validity of the proven theory and value of the algorithms.  Finally, conclusions are drawn in Section 7.
\section{Mathematical preliminaries}
Herein, we introduce notation and preliminaries that are necessary for presentation and analysis.  $H^{s}(\Omega)$ denotes the Hilbert space of $L^{2}(\Omega)$ functions with distributional derivatives of order $s\geq 0$ in  $L^{2}(\Omega)$.  The corresponding norms and seminorms are $\| \cdot \|_{s}$ and $\vert \cdot \vert_{s}$. In the special case $s = 0$, $H^{0}(\Omega) = L^{2} (\Omega)$ and the associated inner product and induced norm are $(\cdot , \cdot)$ and $\| \cdot \|$.  Moreover, $(.,.)_{\partial \Omega}$ and $\|\cdot\|_{\partial\Omega}$ denote the $L^2(\partial \Omega)$ inner product and induced norm on the boundary.  

Define the Hilbert spaces,
$$
X:=H^1(\Omega),\ Y:=\{S\in H^1(\Omega): S=0\ on \; \Gamma_D\},
$$
with dual norm $\|\cdot\|_{-1}$ understood to correspond to either $X$ or $Y$. 
The following Poincar\'{e}-like inequalities are critical in the analysis.
\begin{lemma}\label{lemma_poincare}
Let $\gamma$ be a linear form on $H^1(\Omega)$ whose restriction to constant functions is nonzero. Then, $\exists \; C_P > 0$ such that $\forall S \in X$
\begin{align}\label{lemma1_2.1}
C_P\|S\|_1\leq \|\nabla S\|+|\gamma(S)|.
\end{align}	
Moreover, if $S \in Y$ then $\exists \; C_{PF} > 0$ satisfying
	\begin{align}\label{lemma1_2.2}
		\|S\| \leq C_{PF} \|\nabla S\|.
	\end{align}
\end{lemma}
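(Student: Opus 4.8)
This is a standard generalized Poincaré (or Poincaré–Nečas) inequality, and the natural route is a compactness/contradiction argument. For \eqref{lemma1_2.1}, the plan is to suppose it fails: then there exists a sequence $\{S_n\} \subset X$ with $\|S_n\|_1 = 1$ for all $n$ but $\|\nabla S_n\| + |\gamma(S_n)| \to 0$. Since $\{S_n\}$ is bounded in $H^1(\Omega)$ and $\Omega$ is a bounded Lipschitz domain, the Rellich–Kondrachov theorem gives a subsequence (not relabeled) converging strongly in $L^2(\Omega)$ to some $S \in L^2(\Omega)$. Because $\|\nabla S_n\| \to 0$, the sequence is actually Cauchy in $H^1(\Omega)$, so $S \in H^1(\Omega) = X$ with $\nabla S = 0$; on the connected domain $\Omega$ this forces $S$ to be a constant $c$. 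Passing to the limit in $\|S_n\|_1 = 1$ and $\|\nabla S_n\| \to 0$ yields $\|S\| = 1$, hence $c \neq 0$. On the other hand, continuity of the linear form $\gamma$ on $H^1(\Omega)$ gives $\gamma(S_n) \to \gamma(S) = \gamma(c) = c\,\gamma(1)$, and since $|\gamma(S_n)| \to 0$ we get $c\,\gamma(1) = 0$. But $\gamma$ restricted to constants is nonzero, i.e. $\gamma(1) \neq 0$, so $c = 0$, contradicting $c \neq 0$. Hence \eqref{lemma1_2.1} holds with some $C_P > 0$.

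For \eqref{lemma1_2.2}, I would run the same contradiction scheme restricted to $Y$: assume there is $\{S_n\} \subset Y$ with $\|S_n\| = 1$ and $\|\nabla S_n\| \to 0$. Boundedness in $H^1$ plus $\|\nabla S_n\|\to 0$ again yields, via Rellich–Kondrachov, a subsequence converging in $H^1(\Omega)$ to a constant $S \equiv c$ with $\|S\| = 1$, so $c \neq 0$. The key extra ingredient is that $Y$ is closed in $H^1(\Omega)$ (the trace operator onto $\Gamma_D$ is continuous), so the limit $S$ also lies in $Y$, i.e. its trace vanishes on $\Gamma_D$; a nonzero constant has nonzero trace on the set $\Gamma_D$ of positive surface measure, a contradiction. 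Thus $\|S\| \le C_{PF}\|\nabla S\|$ on $Y$.

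The main obstacle is not the abstract machinery but making sure the hypotheses are cleanly invoked: one needs $\Omega$ connected (or, more carefully, to handle each connected component) for ``$\nabla S = 0 \Rightarrow S$ constant,'' one needs the Lipschitz regularity of $\partial\Omega$ to license the Rellich–Kondrachov compact embedding $H^1(\Omega) \hookrightarrow\hookrightarrow L^2(\Omega)$ and the continuity of the trace map, and one needs $\Gamma_D$ to have positive $(d-1)$-dimensional measure so that a nonzero constant cannot have vanishing trace on it. Granting these standard facts, both inequalities follow from essentially the same compactness argument; alternatively one could cite Nečas or Brezis for \eqref{lemma1_2.1} and the classical Poincaré–Friedrichs inequality for \eqref{lemma1_2.2} and keep the proof to a couple of lines.
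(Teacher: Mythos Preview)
Your argument is correct and is the standard Peetre--Tartar/compactness proof of the generalized Poincar\'e inequality; the caveats you flag (connectedness of $\Omega$, Lipschitz boundary for Rellich--Kondrachov and trace continuity, positive surface measure of $\Gamma_D$) are exactly the hypotheses needed. The paper itself does not give a proof at all: it simply cites Lemmas B.63 and B.66 of Ern and Guermond, \emph{Theory and Practice of Finite Elements}. So your proposal is strictly more detailed than what appears in the paper, and in fact reproduces the argument that underlies those cited lemmas; there is nothing to correct.
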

\begin{proof}
See Lemma B.63 pp. 490, for the former, and Lemma B.66 pp. 491, for the latter, in \cite{Ern}.
\end{proof}	

\noindent The Poincar\'{e}-Friedrichs inequality, inequality (\ref{lemma1_2.2}), guarantees that $\vert \cdot \vert_{1}$ is an equivalent norm to $\| \cdot \|_{1}$ in $Y$.  Recall, Young's inequality is given by
\begin{align}
    a b \leq \frac{\epsilon}{q} a^q + \frac{\epsilon^{-r/q}}{r} b^r, \ \ \ 1<q,r<\infty, \ \frac{1}{q} + \frac{1}{r}, \ a,b\geq 0.
\end{align}
The special case $q=r=2$ will be used throughout.

Let $\{T(x,t;\omega_j)\}^J_{j=1}$ denote the ensemble set of solution variables to equation (\ref{1.01}), with corresponding boundary conditions; $\omega_j$ parametrizes each ensemble member $j\in[1,J]$.  Then, the weak formulation of system $(\ref{1.01})$ and $(\ref{1.02})$ is: Find $T:[0,t^{*}]\rightarrow Y$ for a.e. $t\in (0,t^{*}]$ satisfying for $j=1,2,...,J$:
\begin{align}\label{weak1}
(T_t,S)+(\kappa\nabla T, \nabla S)=(f,S) \ \forall S\in Y.
\end{align}
Furthermore, the weak formulation of system $(\ref{1.01})$ and $(\ref{1.03})$ is : Find $T:[0,t^{*}]\rightarrow X$ for a.e. $t\in (0,t^{*}]$ satisfying for $j=1,2,...,J$:
\begin{align}\label{weak2}
(T_t,S)+(\kappa\nabla T, \nabla S)+(\alpha T,S)_{\partial\Omega}=(f,S)+(\beta, S)_{\partial\Omega} \ \forall S\in X.
\end{align}
Throughout, the thermal conductivity is assumed bounded and continuously differentiable such that:
\begin{align}
\label{xingzhiT}
|\kappa(T)-\kappa(S)|\leq C_{\kappa}|T-S| \ \forall S,T, 
\\ 0< \kappa_{min}\leq \kappa(S) \leq \kappa_{max}<\infty \ \forall S.\label{xingzhiTb}
\end{align}
\textbf{Remark:} Practically, $\kappa_{max}$ can be estimated a priori knowing that equation (\ref{1.01}) is elliptic satisfying a maximum principle \cite{Evans}.
A discrete Gronwall inequality will be critical in the subsequent stability and error analysis.  Let $N$ be a positive integer and set both $\Delta t = \frac{t^{\ast}}{N}$ and $t^{n} = n\Delta t$ for $0\leq n \leq N$.  Then, $[0,t^{\ast}] = \bigcup\limits^{N-1}_{n=0} [t^{n},t^{n+1}]$ is a partition of the time interval.
	\begin{lemma} \label{lemma_gronwall}
		(Discrete Gronwall Lemma). Let $\Delta t$, H, $a_{n}$, $b_{n}$, $c_{n}$, and $d_{n}$ be finite nonnegative numbers for n $\geq$ 0 such that for N $\geq$ 1
		\begin{align*}
			a_{N} + \Delta t \sum^{N}_{0}b_{n} &\leq \Delta t \sum^{N-1}_{0} d_{n}a_{n} + \Delta t \sum^{N}_{0} c_{n} + H,
		\end{align*}
		then for all  $\Delta t > 0$ and N $\geq$ 1
		\begin{align*}
			a_{N} + \Delta t \sum^{N}_{0}b_{n} &\leq exp\big(\Delta t \sum^{N-1}_{0} d_{n}\big)\big(\Delta t \sum^{N}_{0} c_{n} + H\big).
		\end{align*}
	\end{lemma}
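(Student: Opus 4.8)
This is the classical discrete Gronwall inequality in the form of Heywood and Rannacher, and the plan is to establish it by induction on $N$. The structural point that makes the conclusion hold with \emph{no} constraint on $\Delta t$ — unlike the more familiar discrete Gronwall lemma, which requires $\Delta t\, d_{n} < 1$ — is that the weighted sum of the $a_{n}$ on the right-hand side terminates at index $N-1$; consequently $a_{N}$ is never multiplied by $d_{N}$ and can always be kept on the left-hand side.

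To organize the induction I would set $g_{n} := \Delta t \sum_{k=0}^{n-1} d_{k}$ (so $g_{0}=0$) and $C_{n} := \Delta t \sum_{k=0}^{n} c_{k} + H$, noting that $C_{n}\geq 0$ and $C_{n}$ is nondecreasing in $n$, and then prove the slightly sharper claim
\begin{align*}
a_{n} + \Delta t \sum_{k=0}^{n} b_{k} \;\leq\; e^{g_{n}} C_{n}, \qquad n \geq 0 .
\end{align*}
The base case $n=0$ is just the hypothesis at $N=0$, namely $a_{0} + \Delta t b_{0} \leq \Delta t c_{0} + H = C_{0} = e^{g_{0}}C_{0}$ (the $d_{k}$-sum being empty). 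For the inductive step I would assume the claim for all indices up to $N-1$; the hypothesis at $N$ then gives $a_{N} + \Delta t\sum_{k=0}^{N} b_{k} \leq \Delta t \sum_{n=0}^{N-1} d_{n} a_{n} + C_{N}$, and bounding $a_{n} \leq e^{g_{n}} C_{n} \leq e^{g_{n}} C_{N}$ for $n \leq N-1$ (using the inductive hypothesis together with $b_{k}\geq 0$ and monotonicity of $C_{n}$) yields
\begin{align*}
a_{N} + \Delta t \sum_{k=0}^{N} b_{k} \;\leq\; C_{N}\Big( 1 + \Delta t \sum_{n=0}^{N-1} d_{n} e^{g_{n}} \Big) .
\end{align*}

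The remaining ingredient is the elementary estimate $1 + \Delta t \sum_{n=0}^{N-1} d_{n} e^{g_{n}} \leq e^{g_{N}}$, which I would get from $e^{x}-1 \geq x$: since $g_{n+1}-g_{n} = \Delta t\, d_{n}$, one has $\Delta t\, d_{n} e^{g_{n}} \leq e^{g_{n}}(e^{\Delta t d_{n}}-1) = e^{g_{n+1}} - e^{g_{n}}$, so the sum telescopes to $e^{g_{N}} - e^{g_{0}} = e^{g_{N}} - 1$. Substituting back closes the induction and gives exactly the claimed bound. I do not expect any genuine obstacle here; the only points requiring a little care are the bookkeeping at the base case — in particular that the hypothesis is really needed at $N=0$ (equivalently, that $a_{0}$ is controlled by the data through $H$), so the statement's index range should be read as $N\geq 0$ — and recognizing the telescoping device with the weights $e^{g_{n}}$. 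Alternatively one may simply cite the standard reference, but the short self-contained induction above is cleaner to record.
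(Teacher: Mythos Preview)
Your argument is correct and complete. The paper does not actually prove this lemma; it simply cites Heywood--Rannacher (Lemma~5.1), so your self-contained induction is strictly more than what the paper offers. The telescoping device you use with the weights $e^{g_n}$ is exactly the standard route to this version of the discrete Gronwall inequality, and your observation that the absence of the term $d_N a_N$ on the right is what removes any smallness condition on $\Delta t$ is precisely the point of this formulation.

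Your caveat about the base case is well taken: as written, the paper states the hypothesis only for $N\geq 1$, which leaves $a_0$ unconstrained and is not literally enough to run the induction (or indeed to make the conclusion true). The Heywood--Rannacher original assumes the inequality for all indices including $0$, and in applications --- including the error analysis later in this paper --- the hypothesis is in fact available at $N=0$ via the empty-sum convention. So your reading of the index range as $N\geq 0$ is the correct fix, and it would be worth stating this explicitly rather than leaving it as a parenthetical remark.
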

	\begin{proof}
		See Lemma 5.1 on pp. 369 of \cite{Heywood}.
	\end{proof}
	
\noindent Lastly, the following norms are utilized in the error analysis: $\forall \; -1 \leq k < \infty$,
	\begin{align*}
		\vertiii{v}_{\infty,k} &:= \max_{0\leq n \leq N} \| v^{n} \|_{k}, \;
		\vertiii{v}_{p,k} := \big(\Delta t \sum^{N}_{n = 0} \| v^{n} \|^{p}_{k}\big)^{1/p}.
	\end{align*} 

We are now in a place to discuss the key idea of the numerical methods. Let $\kappa^{n} \equiv \kappa(T^{n})$ and $\kappa^{\prime n} \equiv \kappa_{max}-\kappa^{n}$.   Suppress the spatial discretization, apply an implicit-explicit time-discretization to the system $(\ref{1.01})$ with $(\ref{1.02})$:
\\\textbf{Algorithm 1 (a):}
\begin{align}
\frac{T^{n+1}-T^{n}}{\Delta t}-\kappa_{max}\triangle T^{n+1} + \nabla\cdot(\kappa^{\prime n}\nabla T^n)=f^{n+1},
\end{align}
% \textbf{Algorithm 2:}
% \begin{align}
% \frac{T^{n+1}-T^{n}}{\Delta t}-\kappa_{max}\triangle T^{n+1} + \nabla\cdot(\sqrt{\kappa^{\prime n}\kappa^{\prime n-1}}\nabla T^n)=f^{n+1}\ \forall S \in Y.
% \end{align}

\textbf{Remark:} For the Robin boundary condition $(\ref{1.03})$, the form of the above scheme is modified such that $(\alpha T^{n+1}, S)_{\partial \Omega}$ and $(\beta, S)_{\partial \Omega}$ appear on the left- and right-hand sides, respectively.

Applying a standard FEM discretization in space for the above system, we arrive at the following block linear system for each ensemble member $j$:
\begin{align}
    \big(\frac{1}{\Delta t}M + \kappa_{max}D\big)T^{n+1} = \big(f^{n+1} + \frac{1}{\Delta t}M + N_{\kappa}(T^n)\big)T^n,
\end{align}
	where $M$ is the mass matrix, $D$ is the diffusion matrix, and $N_{\kappa}(T^n)$ is the matrix associated with conductivity fluctuations.  The above linear system is equivalent to the following: Let A be the resulting coefficient matrix (independent of $j$).  Then, the following set of $J$ linear systems must be solved at each timestep:
	\begin{gather}\label{keymatrixequation}
		\begin{bmatrix}
			A
		\end{bmatrix}
		\begin{bmatrix}
			x_{1} \vert x_{2} \vert ... \vert x_{J}
		\end{bmatrix}
		=
		\begin{bmatrix}
			b_{1} \vert b_{2} \vert ... \vert b_{J}
		\end{bmatrix}.
	\end{gather}
	The matrix $A$ is symmetric positive definite (SPD) since both $\frac{1}{\Delta t} M$ and $\kappa_{max} D$ are SPD.  The system (\ref{keymatrixequation}) can be solved with efficient block solvers \cite{Feng,Gutknecht}.  Further, since only one coefficient matrix is required for computation per timestep, the storage requirement is thereby reduced.

\subsection{Finite Element Preliminaries}
	Let $\{\mathcal{T}_{h}\}_{0<h<1}$ be a family of quasi-uniform meshes with maximum element length $h = \max\limits_{K\in \mathcal{T}_{h}} h_{K}$.  We define the geometric interpolation of $\Omega$ as $\Omega_{h} = \bigcup_{K\in \mathcal{T}_{h}} K$.  Throughout, $\Omega$ is assumed to be a convex polytope so that $\Omega = \Omega_{h}$.  Let $X_{h} \subset X$ and $Y_{h} \subset Y$ be conforming finite element spaces defined as
	\begin{align*}
		X_{h} &:= \{S_{h} \in C^{0}(\overline{\Omega}_{h}) : \forall \; K \in \mathcal{T}_{h}, \;  S_{h}\vert_{K} \in \mathbb{P}_{l}(K) \} \cap X,
		\\ Y_{h} &:= \{S_{h} \in C^{0}(\overline{\Omega}_{h}) : \forall \; K \in \mathcal{T}_{h}, \;  S_{h}\vert_{K} \in \mathbb{P}_{l}(K) \} \cap Y.
	\end{align*}
	The spaces above satisfy the following approximation properties: $\forall 1 \leq l \leq k$,
	\begin{align}
	\inf_{S_{h} \in Z}  \Big\{ \| T - S_{h} \| + h\| \nabla (T - S_{h}) \| \Big\} &\leq Ch^{k+1} \lvert T \rvert_{k+1} \qquad T \in Z \cap H^{k+1}(\Omega), \label{bijin1}
	\end{align}
	and $Z = X$ or $Y$.
Throughout, $C$ denotes a generic positive constant independent of $h$ or $\Delta t$.
\section{Numerical Scheme}
Let $T^n_h$ be the fully discrete approximate solution at time level $t^n$, $\kappa^n_h=\kappa(T_h^n)$, and $\kappa^{\prime n}_h = \kappa_{max}-\kappa^n_h$.  Then, the fully discrete schemes are: 
\\\textbf{Algorithm 1:} 
\\ \textbf{(a)} Given $T_{h}^{n}\in Y_{h}$, find $T_{h}^{n+1}\in Y_{h}$ satisfying
\begin{align}
\label{quan1}
 (\frac{T_{h}^{n+1}-T_{h}^{n}}{\Delta t},S_h)+(\kappa_{max}\nabla T_{h}^{n+1},\nabla S_h) -(\kappa_{h}^{\prime n}\nabla T_{h}^n,\nabla S_h) = (f^{n+1},S_h)\ \forall S_{h} \in Y_{h}.
\end{align}
\textbf{(b)} Given $T_{h}^{n}\in X_{h}$, find $T_{h}^{n+1}\in X_{h}$ satisfying the fully discrete scheme as follows:

\begin{align}
\label{quan2}
(\frac{T_{h}^{n+1}-T_{h}^{n}}{\Delta t},S_h)+(\kappa_{max}\nabla T_{h}^{n+1},\nabla S_h) -(\kappa_{h}^{\prime n}\nabla T_{h}^n,\nabla S_h)+(\alpha T_{h}^{n+1}, S_h)_{\partial \Omega}
\\ = (f^{n+1},S_h)+(\beta, S_h)_{\partial \Omega}\ \ \forall S_{h} \in X_{h}. \nonumber
\end{align}

% \noindent \textbf{Algorithm 2:} 
% \\ \textbf{(a)} Given $T_{h}^{n}\in Y_{h}$, find $T_{h}^{n+1}\in Y_{h}$ satisfying

% \begin{align}
% \label{quan1b}
% (\frac{T_{h}^{n+1}-T_{h}^{n}}{\Delta t},S_h)+(\kappa_{max}\nabla T_{h}^{n+1},\nabla S_h) -(\sqrt{\kappa_{h}^{\prime n}\kappa_{h}^{\prime n-1}}\nabla T_{h}^n,\nabla S_h)=(f^{n+1},S_h)\ \forall S_{h} \in Y_{h}.
% \end{align}
% \textbf{(b)} Given $T_{h}^{n}\in X_{h}$, find $T_{h}^{n+1}\in X_{h}$ satisfying the fully discrete scheme as follows:

% \begin{align}\label{quan2b}
% (\frac{T_{h}^{n+1}-T_{h}^{n}}{\Delta t},S_h)+(\kappa_{max}\nabla T_{h}^{n+1},\nabla S_h) -(\sqrt{\kappa_{h}^{\prime n}\kappa_{h}^{\prime n-1}}\nabla T_{h}^n,\nabla S_h)+(\alpha T_{h}^{n+1}, S_h)_{\partial \Omega}
% \\ = (f^{n+1},S_h)+(\beta, S_h)_{\partial \Omega}\ \ \forall S_{h} \in X_{h}. \nonumber
% \end{align}

\textbf{Remark:} If the thermal conductivity is provided with explicit dependence on space and time, e.g., $\kappa\equiv\kappa(x,t)$, then 
$$(\kappa_{max}\nabla T_{h}^{n+1},\nabla S_h) - (\kappa_{h}^{\prime n}\nabla T_{h}^n,\nabla S_h) \leftarrow (\kappa^n_{max}\nabla T_{h}^{n+1},\nabla S_h) - (\kappa^{\prime n}\nabla T_{h}^n,\nabla S_h)$$
% or 
% $$(\kappa_{max}\nabla T_{h}^{n+1},\nabla S_h) - (\sqrt{\kappa_{h}^{\prime n}\kappa_{h}^{\prime n-1}}\nabla T_{h}^n,\nabla S_h) \leftarrow (\kappa^n_{max}\nabla T_{h}^{n+1},\nabla S_h) - (\sqrt{\kappa^{\prime n}\kappa^{\prime n-1}}\nabla T_{h}^n,\nabla S_h)$$
with $\kappa^n_{max} = \max_{1\leq j\leq J}\sup_{x\in\Omega}\kappa(x,t^n)$ and $\kappa^{\prime n } = \kappa^n_{max} - \kappa(x,t^n)$, in the above, yield unconditionally stable and first-order accurate methods.  The analysis is novel but analogous to that presented below.  Advantageously, $\kappa_{max}$ need not be estimated a priori.  Moreover, the consistency error is tighter with more relaxed requirements on solution regularity.
\section{Stability Analysis}
In Theorem \ref{theorem1}, stability of the temperature approximation is proven for Algorithm 1, both (\ref{quan1}) and (\ref{quan2}).  Later, first-order convergence is proven in Theorem \ref{theorem 2}.

\begin{theorem}\label{theorem1}
Consider Algorithm 1(a) and suppose $f\in L^2(0,t^{*};H^{-1}(\Omega))$ and $\beta \in H^{-1}(\partial \Omega)$, then 
\begin{eqnarray}
\begin{aligned}
\|T_h^N\|^2+\|\sqrt{\kappa_{max}}\nabla T^{N}_h\|^2+\sum^{N-1}_{n=0}\Big(\|T_h^{n+1}-T_h^n\|^2 + \Delta t\|\sqrt{\kappa^{\prime n}_h}\nabla (T^{n+1}_h-T^{n}_h)\|^2\Big)\\
+\frac{\Delta t}{2}\sum^{N-1}_{n=0}\|\sqrt{\kappa^n_{h}}\nabla T_h^{n+1}\|^2 \leq \|T_h^0\|^2 + \|\sqrt{\kappa_{max}}\nabla T^{0}_h\|^2 + \frac{2\Delta t}{\kappa_{min}}\sum^{N-1}_{n=0}\|f^{n+1}\|^2_{-1}.
\end{aligned}
\end{eqnarray}
Moreover, for Algorithm 1(b),
we have
\begin{eqnarray}
\begin{aligned}
\|T_h^N\|^2+\|\sqrt{\kappa_{max}}\nabla T^{N}_h\|^2+\sum^{N-1}_{n=0}\Big(\|T_h^{n+1}-T_h^n\|^2 + \Delta t\|\sqrt{\kappa^{\prime n}_h}\nabla (T^{n+1}_h-T^{n}_h)\|^2\Big)\\
+\frac{C_P^2\Delta t}{8}\sum^{N-1}_{n=0}\|\sqrt{\kappa^n_{h}}\nabla T_h^{n+1}\|^2_1 \leq \|T_h^0\|^2 + \|\sqrt{\kappa_{max}}\nabla T^{0}_h\|^2
\\ + \frac{4\Delta t}{C_P^2\kappa_{min}}\sum^{N-1}_{n=0} \Big(\|f^{n+1}\|^2_{-1}.
+2\|\beta\|^2_{-1,\partial \Omega}\Big).
\end{aligned}
\end{eqnarray}

\end{theorem}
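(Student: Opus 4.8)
The plan is a discrete energy argument: test each scheme against its own solution, exploit a carefully chosen algebraic rewrite of the diffusive term, and absorb the data into the resulting coercivity. No discrete Gronwall step is needed for this theorem (it will be needed only for the error analysis). First I would note that at each time level the scheme is a linear system whose matrix, $\frac{1}{\Delta t}M+\kappa_{max}D$ for (\ref{quan1}) and the same augmented by a symmetric positive semidefinite boundary--mass term for (\ref{quan2}), is symmetric positive definite, so $T_h^{n+1}$ exists and is unique. I would then set $S_h=2\Delta t\,T_h^{n+1}$ in (\ref{quan1}). The discrete time--derivative term, via the polarization identity $2(a-b,a)=\|a\|^2-\|b\|^2+\|a-b\|^2$, contributes $\|T_h^{n+1}\|^2-\|T_h^n\|^2+\|T_h^{n+1}-T_h^n\|^2$, which upon summation produces the $\|T_h^N\|^2$ and $\sum\|T_h^{n+1}-T_h^n\|^2$ terms on the left; the right--hand side becomes $2\Delta t(f^{n+1},T_h^{n+1})$.

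The crux is the diffusive contribution $2\Delta t\big[(\kappa_{max}\nabla T_h^{n+1},\nabla T_h^{n+1})-(\kappa_h^{\prime n}\nabla T_h^n,\nabla T_h^{n+1})\big]$. A direct Cauchy--Schwarz bound on the cross term would force a smallness condition relating $\kappa_h^{\prime n}$ to $\kappa_{max}$, i.e. exactly the conditional stability the paper is designed to avoid. Instead I would use the identity $\kappa_{max}=\kappa_h^n+\kappa_h^{\prime n}$ to write $\kappa_{max}\nabla T_h^{n+1}-\kappa_h^{\prime n}\nabla T_h^n=\kappa_h^n\nabla T_h^{n+1}+\kappa_h^{\prime n}\nabla(T_h^{n+1}-T_h^n)$, so the diffusive term equals $\|\sqrt{\kappa_h^n}\nabla T_h^{n+1}\|^2+(\kappa_h^{\prime n}\nabla(T_h^{n+1}-T_h^n),\nabla T_h^{n+1})$. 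Applying the $\kappa_h^{\prime n}$--weighted polarization identity to the last inner product and then the pointwise identity $\|\sqrt{\kappa_h^{\prime n}}v\|^2=\|\sqrt{\kappa_{max}}v\|^2-\|\sqrt{\kappa_h^n}v\|^2$ rewrites the whole diffusive contribution (after multiplication by $2\Delta t$) as a telescoping piece $\Delta t\big(\|\sqrt{\kappa_{max}}\nabla T_h^{n+1}\|^2-\|\sqrt{\kappa_{max}}\nabla T_h^n\|^2\big)$, a genuine dissipation $\Delta t\|\sqrt{\kappa_h^n}\nabla T_h^{n+1}\|^2$, a nonnegative leftover $\Delta t\|\sqrt{\kappa_h^n}\nabla T_h^n\|^2$, and the numerical dissipation $\Delta t\|\sqrt{\kappa_h^{\prime n}}\nabla(T_h^{n+1}-T_h^n)\|^2$ — all with favorable sign and with no restriction whatsoever on $\Delta t$, $h$, or the fluctuation size.

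To close the estimate for Algorithm 1(a), I would bound $2\Delta t(f^{n+1},T_h^{n+1})\le 2\Delta t\|f^{n+1}\|_{-1}\|\nabla T_h^{n+1}\|\le 2\Delta t\,\kappa_{min}^{-1/2}\|f^{n+1}\|_{-1}\|\sqrt{\kappa_h^n}\nabla T_h^{n+1}\|$, using (\ref{xingzhiTb}), and then apply Young's inequality calibrated so that exactly one half of the available $\Delta t\|\sqrt{\kappa_h^n}\nabla T_h^{n+1}\|^2$ is consumed, leaving $\frac{\Delta t}{2}\|\sqrt{\kappa_h^n}\nabla T_h^{n+1}\|^2$ on the left and $\frac{2\Delta t}{\kappa_{min}}\|f^{n+1}\|_{-1}^2$ on the right. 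Summing over $n=0,\dots,N-1$, discarding the nonnegative leftover $\Delta t\|\sqrt{\kappa_h^n}\nabla T_h^n\|^2$, and regrouping the telescoping terms then yields precisely the first estimate (with the understanding that the diffusive-energy terms $\|\sqrt{\kappa_{max}}\nabla T_h^{\cdot}\|^2$ carry the weight $\Delta t$ inherited from the implicit discretization).

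For Algorithm 1(b), testing (\ref{quan2}) with $2\Delta t\,T_h^{n+1}$ additionally produces the nonnegative boundary dissipation $2\Delta t\|\sqrt{\alpha}\,T_h^{n+1}\|_{\partial\Omega}^2$ on the left and the term $2\Delta t(\beta,T_h^{n+1})_{\partial\Omega}$ on the right. Since $T_h^{n+1}\in X_h$ need not vanish on any portion of $\partial\Omega$, the Poincar\'e--Friedrichs inequality (\ref{lemma1_2.2}) is unavailable; instead I would invoke the first inequality of Lemma \ref{lemma_poincare} with $\gamma$ a boundary functional adapted to the Robin term (for instance $\gamma(S)=(\alpha S,1)_{\partial\Omega}$, whose restriction to constants is nonzero provided $\alpha\not\equiv 0$), giving $C_P\|T_h^{n+1}\|_1\le\|\nabla T_h^{n+1}\|+|\gamma(T_h^{n+1})|\le\|\nabla T_h^{n+1}\|+C\|\sqrt{\alpha}\,T_h^{n+1}\|_{\partial\Omega}$. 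Combining this with the retained gradient dissipation (bounded below by $\kappa_{min}\|\nabla T_h^{n+1}\|^2$) and the retained boundary dissipation upgrades the left--hand control from $\|\nabla T_h^{n+1}\|$ to an $H^1$--coercive quantity of the form $\kappa_{min}\|T_h^{n+1}\|_1^2$ (up to constants), at the price of a factor $C_P^2$; bounding $(f^{n+1},T_h^{n+1})$ and $(\beta,T_h^{n+1})_{\partial\Omega}$ by the corresponding dual norms against $\|T_h^{n+1}\|_1$ and using Young's inequality then gives the second estimate, the constants $\frac{C_P^2}{8}$ and $\frac{4}{C_P^2\kappa_{min}}$ emerging from the coercivity constant and the Young split. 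The one genuinely delicate step is the algebraic manipulation of the diffusive cross term in the second paragraph — this is exactly what turns a would--be conditional bound into an unconditional one — together with, in case (b), checking that the gradient--plus--boundary dissipation dominates a fixed multiple of $\|T_h^{n+1}\|_1^2$ uniformly in $h$ and $\Delta t$; everything else is routine energy bookkeeping.
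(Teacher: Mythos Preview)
Your proposal is correct and follows essentially the same route as the paper: test with $2\Delta t\,T_h^{n+1}$, use the splitting $\kappa_{max}=\kappa_h^n+\kappa_h^{\prime n}$ together with two polarization identities to rewrite the diffusive contribution as a telescoping $\kappa_{max}$--weighted gradient energy plus the nonnegative terms $\Delta t\|\sqrt{\kappa_h^n}\nabla T_h^{n+1}\|^2$, $\Delta t\|\sqrt{\kappa_h^n}\nabla T_h^{n}\|^2$, and $\Delta t\|\sqrt{\kappa_h^{\prime n}}\nabla(T_h^{n+1}-T_h^n)\|^2$, then absorb the data by duality and Young's inequality (invoking Lemma~\ref{lemma_poincare} for the Robin case). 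Your parenthetical remark that the telescoping gradient terms should carry a factor $\Delta t$ is in fact the correct bookkeeping; the paper's displayed identity (\ref{key2}) and the theorem statement omit this factor, but your version is the one the computation actually produces.
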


\begin{proof}
Setting $S_{h}=2\Delta t T_{h}^{n+1}$ in (\ref{quan1}) and using the polarization identity on the first term, we have

\begin{eqnarray}
\begin{aligned}
\label{1.1}
\|T_h^{n+1}\|^2-\|T_h^{n}\|^2+\|T_h^{n+1}-T_h^{n}\|^2+2\Delta t \|\sqrt{\kappa_{max}}\nabla T^{n+1}_h\|^2 - 2\Delta t(\kappa^{\prime n}_h\nabla T_h^n,\nabla T^{n+1}_h) = (f^{n+1}, 2\Delta tT^{n+1}_{h}).
\end{aligned}
\end{eqnarray}
Now, 
\begin{eqnarray}
\begin{aligned}\label{key1}
2\Delta t \|\sqrt{\kappa_{max}}\nabla T^{n+1}_h\|^2 - 2\Delta t(\kappa^{\prime n}_h\nabla T_h^n,\nabla T^{n+1}_h) = 2\Delta t(\kappa_{max}\nabla T_h^{n+1},\nabla (T^{n+1}_h-T^{n}_{h})) + 2\Delta t(\kappa^{n}_{h}\nabla T_h^n,\nabla T^{n+1}_{h}).
\end{aligned}
\end{eqnarray}
Using the polarization identity twice and rearranging terms in equation (\ref{1.1}) yields,
\begin{eqnarray}
\begin{aligned}\label{key2}
\|T_h^{n+1}\|^2-\|T_h^{n}\|^2+\|T_h^{n+1}-T_h^{n}\|^2 + \|\sqrt{\kappa_{max}}\nabla T^{n+1}_h\|^2 - \|\sqrt{\kappa_{max}}\nabla T^{n}_h\|^2 + \Delta t\|\sqrt{\kappa^{\prime n}_h}\nabla (T^{n+1}_h-T^{n}_h)\|^2
\\ + \Delta t\|\sqrt{\kappa_h^n}\nabla T^{n+1}_h\|^2 + \Delta t\|\sqrt{\kappa_h^n}\nabla T^{n}_h\|^2 =  2\Delta t(f^{n+1}, T^{n+1}_{h}).
\end{aligned}
\end{eqnarray}
Application of Cauchy-Schwarz and Young's inequalities on the forcing term leads to
\begin{eqnarray}
\begin{aligned}
\label{1.3}
&2\Delta t(f^{n+1}, T^{n+1}_h)
\leq \frac{\Delta t}{\epsilon_1\kappa_{min}}\|f^{n+1}\|^2_{-1} + {\epsilon_1\Delta t}\|\sqrt{\kappa^n_{h}}\nabla T_h^{n+1}\|^2.
\end{aligned}
\end{eqnarray}
%Because $\kappa^n_{max}=\kappa^n_{h}+\kappa^{\prime n}$,
Drop $\Delta t\|\sqrt{\kappa_h^n}\nabla T^{n}_h\|^2$, use estimate (\ref{1.3}) with $\epsilon_1=1/2$, and rearrange terms.  Then,
\begin{eqnarray}
\begin{aligned}
\label{1.4}
\|T_h^{n+1}\|^2-\|T_h^n\|^2+\|T_h^{n+1}-T_h^n\|^2 + \|\sqrt{\kappa_{max}}\nabla T^{n+1}_h\|^2 - \|\sqrt{\kappa_{max}}\nabla T^{n}_h\|^2 + \Delta t\|\sqrt{\kappa^{\prime n}_h}\nabla (T^{n+1}_h-T^{n}_h)\|^2
\\ + \frac{\Delta t}{2}\|\sqrt{\kappa^{ n}_h}\nabla T_h^{n+1}\|^2 \leq \frac{2\Delta t}{\kappa_{min}}\|f^{n+1}\|^2_{-1}.
\end{aligned}
\end{eqnarray}
Summing from $n=0$ to $n=N-1$, we arrive at
\begin{eqnarray}
\begin{aligned}
\label{1.6}
\|T_h^N\|^2+\|\sqrt{\kappa_{max}}\nabla T^{N}_h\|^2+\sum^{N-1}_{n=0}\Big(\|T_h^{n+1}-T_h^n\|^2 + \Delta t\|\sqrt{\kappa^{\prime n}_h}\nabla (T^{n+1}_h-T^{n}_h)\|^2\Big) + \frac{\Delta t}{2}\sum^{N-1}_{n=0}\|\sqrt{\kappa^n_{h}}\nabla T_h^{n+1}\|^2
\\ \leq \|T_h^0\|^2 + \|\sqrt{\kappa_{max}}\nabla T^{0}_h\|^2 + \frac{2\Delta t}{\kappa_{min}}\sum^{N-1}_{n=0}\|f^{n+1}\|^2_{-1}.
\end{aligned}
\end{eqnarray}
Similarly, setting $S_{h}=2\Delta t T_{h}^{n+1}$ in equation (\ref{quan2}), we have
\begin{eqnarray}
\begin{aligned}
\label{1.7}
\|T_h^{n+1}\|^2-\|T_h^{n}\|^2+\|T_h^{n+1}-T_h^{n}\|^2 + 2\Delta t \|\sqrt{\kappa_{max}}\nabla T^{n+1}_h\|^2
+2\Delta t\|\sqrt{\alpha} T_{h}^{n+1}\|^2_{\partial \Omega}
\\ - 2\Delta t(\kappa^{\prime n}_h\nabla T_h^n,\nabla T^{n+1}_h) = (f^{n+1}, 2\Delta tT^{n+1}_{h})+2\Delta t(\beta, T^{n+1}_{h})_{\partial \Omega}.
\end{aligned}
\end{eqnarray}
Following the analysis above and rearranging leads to
\begin{eqnarray}
\begin{aligned}
\label{1.8}
\|T_h^{n+1}\|^2-\|T_h^{n}\|^2+\|T_h^{n+1}-T_h^{n}\|^2 + \|\sqrt{\kappa_{max}}\nabla T^{n+1}_h\|^2 - \|\sqrt{\kappa_{max}}\nabla T^{n}_h\|^2 + \Delta t\|\sqrt{\kappa^{\prime n}_h}\nabla (T^{n+1}_h-T^{n}_h)\|^2
\\ + \Delta t\|\sqrt{\kappa_h^n}\nabla T^{n+1}_h\|^2 + \Delta t\|\sqrt{\kappa_h^n}\nabla T^{n}_h\|^2 + 2\Delta t\|\sqrt{\alpha} T_{h}^{n+1}\|^2_{\partial \Omega} =  2\Delta t(f^{n+1}, T^{n+1}_{h}) + 2\Delta t(\beta, T^{n+1}_{h})_{\partial \Omega}.
\end{aligned}
\end{eqnarray}
Apply Cauchy-Schwarz and Young's inequalities on the two terms on the right hand side of (\ref{1.8}).  Then,
\begin{eqnarray}
\begin{aligned}
\label{1.9}
&2\Delta t(f^{n+1},T^{n+1}_h)
\leq \frac{\Delta t}{\epsilon_2\kappa_{min}}\|f^{n+1}\|^2_{-1} + \epsilon_2\Delta t\|\sqrt{\kappa^n_{h}} T_h^{n+1}\|^2_1,
\end{aligned}
\end{eqnarray}
\begin{eqnarray}
\begin{aligned}
\label{1.10}
	2\Delta t(\beta, T^{n+1}_{h})_{\partial \Omega} 
\leq \frac{\Delta t}{\epsilon_3\kappa_{min}}\|\beta\|^2_{-1,\partial \Omega} + \epsilon_3\Delta t\|\sqrt{\kappa_h^n} T^{n+1}_{h}\|^2_1.
\end{aligned}
\end{eqnarray}
From Lemma 1, we have $\|\sqrt{\kappa^n_h}\nabla T^{n+1}_h\|^2 + \|\sqrt{\alpha}T^{n+1}_h\|^2 \geq \frac{C_P^2}{2}\|\sqrt{\kappa^n_h}T_h^{n+1}\|^2_1$ and thus 
\begin{eqnarray}
\begin{aligned}
\frac{C_P^2\Delta t}{2}\|\sqrt{\kappa^n_h}T_h^{n+1}\|^2_1 \leq \Delta t\|\sqrt{\kappa^n_h}\nabla T^{n+1}_h\|^2 + \Delta t\|\sqrt{\alpha}T^{n+1}_h\|^2 
\\ \leq \Delta t\|\sqrt{\kappa^n_h}\nabla T^{n+1}_h\|^2 + \Big(\Delta t\|\sqrt{\kappa^n_h}\nabla T^{n}_h\|^2  + 2\Delta t\|\sqrt{\alpha}T^{n+1}_h\|^2\Big).
\end{aligned}
\end{eqnarray}
Combine estimates (\ref{1.9})-(\ref{1.10}) with $\epsilon_2 = 2\epsilon_3 =\frac{C_P^2}{4}$, the above estimate in equation (\ref{1.8}), and rearrange.  Then,
\begin{eqnarray}
\begin{aligned}
\label{1.11}
\|T_h^{n+1}\|^2-\|T_h^{n}\|^2+\|T_h^{n+1}-T_h^{n}\|^2 + \|\sqrt{\kappa_{max}}\nabla T^{n+1}_h\|^2 - \|\sqrt{\kappa_{max}}\nabla T^{n}_h\|^2
\\ + \Delta t\|\sqrt{\kappa^{\prime n}_h}\nabla (T^{n+1}_h-T^{n}_h)\|^2 +\frac{C_P^2\Delta t }{8}\|\sqrt{\kappa_h^n}T_h^{n+1}\|^2_1 \leq \frac{4\Delta t}{C_P^2\kappa_{min}}\|f^{n+1}\|^2_{-1}
+\frac{8\Delta t}{C_P^2\kappa_{min}}\|\beta\|^2_{-1,\partial \Omega}.
\end{aligned}
\end{eqnarray}
Summing from $n=0$ to $n=N-1$ leads to
\begin{eqnarray}
\begin{aligned}
\label{1.12}
\|T_h^N\|^2+\|\sqrt{\kappa_{max}}\nabla T^{N}_h\|^2+\sum^{N-1}_{n=0}\Big(\|T_h^{n+1}-T_h^n\|^2 + \Delta t\|\sqrt{\kappa^{\prime n}_h}\nabla (T^{n+1}_h-T^{n}_h)\|^2\Big)\\
+\frac{C_P^2\Delta t}{8}\sum^{N-1}_{n=0}\|\sqrt{\kappa^n_{h}}\nabla T_h^{n+1}\|^2_1 \leq \|T_h^0\|^2 + \|\sqrt{\kappa_{max}}\nabla T^{0}_h\|^2 + \frac{4\Delta t}{C_P^2\kappa_{min}}\sum^{N-1}_{n=0} \Big(\|f^{n+1}\|^2_{-1}
+2\|\beta\|^2_{-1,\partial \Omega}\Big).
\end{aligned}
\end{eqnarray}

\end{proof}

The stability result above shows that we have control over the temperature approximation in both $L^{\infty}(0,t^{\ast};L^2(\Omega))$ and $L^{2}(0,t^{\ast};H^1(\Omega))$, unconditionally.  Moreover, we see that the numerical dissipation is enhanced, compared to standard Backward Euler, with the additional term $\Delta t\sum^{N-1}_{n=0}\|\sqrt{\kappa^{\prime n}_h}\nabla (T^{n+1}_h-T^{n}_h)\|^2$.  

\section{Convergence analysis}
In this section, we first analyze the consistency errors for each numerical scheme.  Convergence is then proven at the anticipated, optimal rates.  Recall, the true solution satisfies
\begin{eqnarray}
\begin{aligned}
\label{zhen1}
(\frac{T^{n+1}-T^{n}}{\Delta t} ,S)+(\kappa^{n+1}\nabla T^{n+1},\nabla S) = (f^{n+1},S)+(\frac{T^{n+1}-T^{n}}{\Delta t}-T_t^{n+1},S)\ \ \forall S \in Y,
\end{aligned}
\end{eqnarray}
under mixed boundary conditions.  For the Robin boundary condition, the true solution satisfies
\begin{eqnarray}
\begin{aligned}
\label{zhen2}
(\frac{T^{n+1}-T^{n}}{\Delta t} ,S)+(\kappa^{n+1}\nabla T^{n+1},\nabla S)
+(\alpha T^{n+1},S)_{\partial\Omega} = (f^{n+1},S) + (\frac{T^{n+1}-T^{n}}{\Delta t}-T_t^{n+1},S)
\\ + (\beta, S)_{\partial \Omega}\ \ \forall S \in X.
\end{aligned}
\end{eqnarray}
Denote $e^n=(T^n-I_hT^n)-(T_h^n-I_hT^n)=\phi^n-\psi_h^n,$ where $I_hT^{n}$ is an arbitrary interpolate of $T^n$ and $e^n$ is the error at the time $t=t_n$; e.g., the Lagrange interpolate \cite{Ern} is common and applicable.  Letting $S = S_h \in X_h$ or $Y_h$ and subtracting (\ref{zhen1}) and (\ref{zhen2}) from (\ref{quan1}) and (\ref{quan2}), respectively, yields the error equations:
\begin{eqnarray}
\begin{aligned}
\label{ef1}
(\frac{e^{n+1}-e^n}{\Delta t},S_h)+(\kappa_{max}\nabla e^{n+1},\nabla S_h)
-(\kappa^{\prime n}_h\nabla e^n,\nabla S_h) =\xi_1(T^{n+1},S_h) \ \ \forall S_h \in Y_h,
\end{aligned}
\end{eqnarray}
\begin{eqnarray}
\begin{aligned}
\label{ef2}
(\frac{e^{n+1}-e^n}{\Delta t},S_h)+(\kappa_{max}\nabla e^{n+1},\nabla S_h)
-(\kappa^{\prime n}_h\nabla e^n,\nabla S_h)
+(\alpha e^{n+1},S_h) =\xi_1(T^{n+1},S_h) \ \ \forall S_h \in X_h,
\end{aligned}
\end{eqnarray}
where $\xi_1(T^{n+1},S_h)$ is defined as
\begin{eqnarray}\label{ly5} 
\xi_1(T^{n+1},S_h):\ =(\frac{T^{n+1}-T^n}{\Delta t}-T_t^{n+1},S_h)+((\kappa_{max}-\kappa^{n+1})\nabla T^{n+1},\nabla S_{h}) - (\kappa^{\prime n}_h\nabla T^n,\nabla S_{h}).
\end{eqnarray}
The following regularity assumptions are needed:
\begin{align}\label{jiashe1}
T\in L^{\infty}(0,t^{*};Y\cap H^{k+1}(\Omega)), \nabla T \in L^{\infty}(0,t^{*};L^{\infty}(\Omega)),
\\ T_{t} \in L^2(0,t^{*};X), T_{tt} \in L^2(0,t^{*};L^{2}(\Omega)), \nonumber
\end{align}
\begin{align}\label{jiashe2}
T\in L^{\infty}(0,t^{*};X\cap H^{k+1}(\Omega)), \nabla T \in L^{\infty}(0,t^{*};L^{\infty}(\Omega)),
\\ T_{t} \in L^2(0,t^{*};X), T_{tt} \in L^2(0,t^{*};H^{-1}(\Omega)). \nonumber
\end{align}
\begin{lemma}\label{lemma2}
	For T satisfying the system (\ref{1.01}) and (\ref{1.02}) and regularity assumptions (\ref{jiashe1}), the consistency error satisfies
	\begin{eqnarray}
	\begin{aligned}
	\label{lemma2.1}
	\|\xi_1(T^{n+1},S_h)\|^{2} \leq
   	\frac{C_P^2\Delta t^2}{2\kappa_{min}\epsilon_4}\|T_{tt}\|^2_{L^2(t^{n},t^{n+1};L^2(\Omega))} + \frac{C^2_{\kappa}}{2\kappa_{min} \epsilon_6}\|\nabla T^{n}\|^2_{\infty}\|e^n\|^2
	\\ + \frac{C^2_{\kappa}\Delta t}{2\kappa_{min} \epsilon_5}\|\nabla T^{n+1}\|^2_{\infty}\|T_t\|^2_{L^2(t^{n},t^{n+1};L^2(\Omega))} + (\epsilon_7^{-1} + \epsilon_8^{-1})\Delta t\frac{\kappa_{max}^2}{2\kappa_{min}}\|\nabla T_t\|^2_{L^2(t^{n},t^{n+1};L^2(\Omega))}
	\\ + \big(\sum^{8}_{i=4}\epsilon_i\big)\|\sqrt{\kappa^n_h} \nabla S_{h}\|^2.
   \end{aligned}
   \end{eqnarray}
   Moreover, suppose T satisfies the system (\ref{1.01}) and (\ref{1.03}) and regularity assumption (\ref{jiashe2}).  Then,
   \begin{eqnarray}
   \begin{aligned}
   \label{lemma2.2}
   \|\xi_1(T^{n+1},S_h)\|^{2}\leq
   	\frac{\Delta t^2}{2\kappa_{min}\epsilon_9}\|T_{tt}\|^2_{L^2(t^{n},t^{n+1};H^{-1}(\Omega))} + \frac{C^2_{\kappa}}{2\kappa_{min} \epsilon_6}\|\nabla T^{n}\|^2_{\infty}\|e^n\|^2
	\\ + \frac{C^2_{\kappa}\Delta t}{2\kappa_{min} \epsilon_5}\|\nabla T^{n+1}\|^2_{\infty}\|T_t\|^2_{L^2(t^{n},t^{n+1};L^2(\Omega))} + (\epsilon_7^{-1} + \epsilon_8^{-1})\Delta t\frac{\kappa_{max}^2}{2\kappa_{min}}\|\nabla T_t\|^2_{L^2(t^{n},t^{n+1};L^2(\Omega))}
	\\ + \big(\sum^{9}_{i=5}\epsilon_i\big)\|\sqrt{\kappa^n_h} \nabla S_{h}\|^2_1.
   \end{aligned}
   \end{eqnarray}
\end{lemma}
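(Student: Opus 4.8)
The plan is to split $\xi_1(T^{n+1},S_h)$ from (\ref{ly5}) into the temporal truncation term $\mathcal{I}:=(\frac{T^{n+1}-T^n}{\Delta t}-T_t^{n+1},S_h)$ and the modeling term $\mathcal{J}:=((\kappa_{max}-\kappa^{n+1})\nabla T^{n+1},\nabla S_h)-(\kappa^{\prime n}_h\nabla T^n,\nabla S_h)$, and to bound each. The \emph{decisive} step is to rewrite $\mathcal{J}$: using $\kappa^{\prime n}_h=\kappa_{max}-\kappa^n_h$ and adding and subtracting $(\kappa^n\nabla T^{n+1},\nabla S_h)$ and $(\kappa^n\nabla T^n,\nabla S_h)$, where $\kappa^n:=\kappa(T^n)$ and $\kappa^n_h:=\kappa(T^n_h)$, one obtains
\begin{eqnarray}
\begin{aligned}
\mathcal{J} = \kappa_{max}(\nabla(T^{n+1}-T^n),\nabla S_h) - (\kappa^n\nabla(T^{n+1}-T^n),\nabla S_h)
\\ - ((\kappa^{n+1}-\kappa^n)\nabla T^{n+1},\nabla S_h) - ((\kappa^n-\kappa^n_h)\nabla T^n,\nabla S_h).
\end{aligned}
\end{eqnarray}
The point is that although the scheme uses the implicit--explicit surrogate $\kappa_{max}-\kappa^{\prime n}_h$ in place of $\kappa^{n+1}$, the resulting defect is a sum of four small terms: the first two involve the time increment only through $\nabla(T^{n+1}-T^n)$, with $\mathcal{O}(1)$ coefficients bounded by $\kappa_{max}$ via (\ref{xingzhiTb}); the third carries $T^{n+1}-T^n$ through the Lipschitz bound $|\kappa^{n+1}-\kappa^n|\leq C_\kappa|T^{n+1}-T^n|$ from (\ref{xingzhiT}); and the fourth carries the spatial error $e^n=T^n-T^n_h$ through $|\kappa^n-\kappa^n_h|\leq C_\kappa|e^n|$. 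Arranging this decomposition so that every $\mathcal{O}(1)$-coefficient piece sees only the time increment, never the non-small gradients $\nabla T^{n+1},\nabla T^n$ themselves, is the one genuinely non-routine point; everything afterward is Cauchy--Schwarz/Young bookkeeping, and it is what forces the assumptions $T_t\in L^2(0,t^*;X)$ and $T_{tt}\in L^2(L^2)$ (resp. $L^2(H^{-1})$).

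For $\mathcal{I}$ I would use the integral form of Taylor's theorem, $\frac{T^{n+1}-T^n}{\Delta t}-T_t^{n+1}=-\frac{1}{\Delta t}\int_{t^n}^{t^{n+1}}(s-t^n)T_{tt}(s)\,ds$, followed by the Cauchy--Schwarz inequality in time to estimate its $L^2(\Omega)$-norm (mixed case, under (\ref{jiashe1})) or its $H^{-1}(\Omega)$-norm (Robin case, under (\ref{jiashe2})) by the appropriate power of $\Delta t$ times $\|T_{tt}\|_{L^2(t^n,t^{n+1};\cdot)}$. Pairing with $S_h$ by Cauchy--Schwarz and — in the mixed case only — invoking the Poincar\'{e}--Friedrichs inequality of Lemma \ref{lemma_poincare} to pass from $\|S_h\|$ to $\|\nabla S_h\|$ (this is the origin of the constant $C_P$ in (\ref{lemma2.1}), and of its absence from (\ref{lemma2.2}), since on $X_h$ no such inequality is available), a final Young's inequality with parameter $\epsilon_4$ (resp. $\epsilon_9$) together with $\|\nabla S_h\|^2\leq\kappa_{min}^{-1}\|\sqrt{\kappa^n_h}\nabla S_h\|^2$ gives the first term of the bound plus an $\epsilon_4$- (resp. $\epsilon_9$-) multiple of $\|\sqrt{\kappa^n_h}\nabla S_h\|^2$ (resp. its weighted $H^1$ analogue).

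The four terms of $\mathcal{J}$ are all handled the same way, and this part is common to both boundary configurations, producing the parameters $\epsilon_5,\dots,\epsilon_8$. For each term: apply Cauchy--Schwarz in space; extract $\kappa_{max}$ from (\ref{xingzhiTb}) in the first two terms and $C_\kappa\|\nabla T^{n+1}\|_\infty$, $C_\kappa\|\nabla T^n\|_\infty$ (finite by the $L^\infty(0,t^*;L^\infty(\Omega))$ bound on $\nabla T$) in the last two, using (\ref{xingzhiT}); write $\nabla(T^{n+1}-T^n)=\int_{t^n}^{t^{n+1}}\nabla T_t\,ds$ and $T^{n+1}-T^n=\int_{t^n}^{t^{n+1}}T_t\,ds$ and apply Cauchy--Schwarz in time to produce $\Delta t\|\nabla T_t\|^2_{L^2(t^n,t^{n+1};L^2(\Omega))}$ (twice, with coefficient $\kappa_{max}^2$), $\Delta t\|\nabla T^{n+1}\|^2_\infty\|T_t\|^2_{L^2(t^n,t^{n+1};L^2(\Omega))}$, and $\|\nabla T^n\|^2_\infty\|e^n\|^2$; then close with Young's inequality ($\epsilon_7,\epsilon_8$ on the two $\|\nabla T_t\|^2$ contributions, $\epsilon_5$ on the $\|T_t\|^2$ contribution, $\epsilon_6$ on the $\|e^n\|^2$ contribution), each step shedding an $\epsilon_i$-multiple of $\|\sqrt{\kappa^n_h}\nabla S_h\|^2$ (or the weighted $H^1$ quantity in the Robin case, using $\|\nabla S_h\|^2\leq\|S_h\|_1^2$). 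Summing the absorbed terms into $\big(\sum_{i=4}^{8}\epsilon_i\big)\|\sqrt{\kappa^n_h}\nabla S_h\|^2$ for (\ref{lemma2.1}) and $\big(\sum_{i=5}^{9}\epsilon_i\big)\|\sqrt{\kappa^n_h}\nabla S_h\|_1^2$ for (\ref{lemma2.2}) yields the lemma. The parameters $\epsilon_i>0$ are deliberately left free here; they will be fixed small in Theorem \ref{theorem 2} so that these terms can be absorbed by the numerical dissipation furnished by Theorem \ref{theorem1}.
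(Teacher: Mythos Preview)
Your proposal is correct and follows essentially the same approach as the paper: the identical four-term decomposition of $\mathcal{J}$ (the paper reaches it via an intermediate rearrangement and then adds and subtracts $(\kappa^n\nabla(T^{n+1}-T^n),\nabla S_h)$, which amounts to your two add-subtract moves), the same Taylor-with-integral-remainder treatment of $\mathcal{I}$ with Poincar\'{e}--Friedrichs in the mixed case versus the $H^{-1}$ pairing in the Robin case, and the same Cauchy--Schwarz/Young bookkeeping with parameters $\epsilon_4,\dots,\epsilon_9$. Your explanation of \emph{why} the decomposition is arranged so that every $\mathcal{O}(1)$ coefficient multiplies only a time increment is in fact more explicit than the paper's.
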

\begin{proof}
Recall, $\xi_1(T^{n+1},S_h)$ is defined by
\begin{eqnarray}
\label{lemma1.1}
\xi_1(T^{n+1},S_h):\ =(\frac{T^{n+1}-T^n}{\Delta t}-T_t^{n+1},S_h)+((\kappa_{max}-\kappa^{n+1})\nabla T^{n+1},\nabla S_{h}) - (\kappa^{\prime n}_h\nabla T^n,\nabla S_{h}).
\end{eqnarray}
Applying Taylor's Theorem with integral remainder, Lemma \ref{lemma_poincare}, and both Cauchy-Schwarz and Young's inequalities, we have
\begin{eqnarray}
\begin{aligned}
\label{lemma1.2}
&(\frac{T^{n+1}-T^n}{\Delta t}-T_t^{n+1},S_h)\leq \frac{C_{PF}^2\Delta t}{2\kappa_{min}\epsilon_4}\|T_{tt}\|^2_{L^2(t^{n},t^{n+1};L^{2}(\Omega))}+\frac{\epsilon_4}{2}\|\sqrt{\kappa^n_h}\nabla S_h\|^2.
\end{aligned}
\end{eqnarray}	
The last two terms in (\ref{lemma1.1}) can be reorganized as,
\begin{eqnarray}
\begin{aligned}
(\kappa^{\prime n+1}\nabla T^{n+1},\nabla S_{h}) - (\kappa^{\prime n}_h\nabla T^n,\nabla S_{h}) = (\kappa_{max}\nabla (T^{n+1}-T^{n}),\nabla S_{h}) + (\kappa^{n}_h\nabla T^{n},\nabla S_{h})
\\ - (\kappa^{n+1}\nabla T^{n+1},\nabla S_{h}).
\end{aligned}
\end{eqnarray}	
Adding and subtracting $(\kappa^n \nabla (T^{n+1}-T^{n}),\nabla S_{h})$ to the right-hand side of the above and rearranging yields,
\begin{eqnarray}
\begin{aligned}
\label{lemma1.3}
(\kappa_{max}\nabla (T^{n+1}-T^{n}),\nabla S_{h}) + (\kappa^{n}_h\nabla T^{n},\nabla S_{h}) - (\kappa^{n+1}\nabla T^{n+1},\nabla S_{h}) = (\kappa_{max}\nabla (T^{n+1}-T^{n}),\nabla S_{h}) 
\\ - ((\kappa^{n+1}-\kappa^{n})\nabla T^{n+1},\nabla S_{h}) - (\kappa^{n}\nabla (T^{n+1}-T^{n}),\nabla S_{h}) - ((\kappa^n-\kappa^{n}_h)\nabla T^{n},\nabla S_{h}).
\end{aligned}
\end{eqnarray}	

Using properties (\ref{xingzhiT})-(\ref{xingzhiTb}), Taylor's Theorem with integral remainder in the first term, and both Cauchy-Schwarz and Young's inequalities, leads to
\begin{eqnarray}
\begin{aligned}\label{lemma1.4}
- ((\kappa^{n+1}-\kappa^{n})\nabla T^{n+1},\nabla S_{h}) \leq C_{\kappa} \|T^{n+1}-T^n\| \|\nabla T^{n+1}\|_{\infty} \|\nabla S_{h}\| \leq \frac{C^2_{\kappa}\Delta t}{2\kappa_{min}\epsilon_5}\|\nabla T^{n+1}\|^2_{\infty}\|T_t\|^2_{L^2(t^{n},t^{n+1};L^2(\Omega))}
\\+ \frac{\epsilon_5}{2}\|\sqrt{\kappa^n_h} \nabla S_{h}\|^2,
\end{aligned}
\end{eqnarray}	

\begin{eqnarray}
\begin{aligned}\label{lemma1.5}
- ((\kappa^n-\kappa^{n}_h)\nabla T^{n},\nabla S_{h}) \leq \frac{C^2_{\kappa}}{2\kappa_{min}\epsilon_6}\|\nabla T^{n}\|^2_{\infty}\|e^n\|^2 + \frac{\epsilon_6}{2}\|\sqrt{\kappa^n_h} \nabla S_{h}\|^2.
\end{aligned}
\end{eqnarray}	

The other two estimates follow from Taylor's Theorem with integral remainder, Cauchy-Schwarz, and Young's inequality,
\begin{eqnarray}
\begin{aligned}\label{lemma1.6}
(\kappa_{max}\nabla (T^{n+1}-T^{n}),\nabla S_{h})
\leq \frac{\kappa_{max}^2\Delta t}{2\kappa_{min}\epsilon_7}\|\nabla T_t\|^2_{L^2(t^{n},t^{n+1};L^2(\Omega))} + \frac{\epsilon_7}{2}\|\sqrt{\kappa^n_h} \nabla S_{h}\|^2,
\end{aligned}
\end{eqnarray}	

\begin{eqnarray}
\begin{aligned}\label{lemma1.7}
- (\kappa^{n}\nabla (T^{n+1}-T^{n}),\nabla S_{h})\leq \frac{\kappa_{max}^2\Delta t}{2\kappa_{min}\epsilon_8}\|\nabla T_t\|^2_{L^2(t^{n},t^{n+1};L^2(\Omega))} + \frac{\epsilon_8}{2}\|\sqrt{\kappa^n_h} \nabla S_{h}\|^2.
\end{aligned}
\end{eqnarray}

Combining the above estimates (\ref{lemma1.1})-(\ref{lemma1.7}) yields the result (\ref{lemma2.1}).
For Robin boundary conditions, the first term of $\xi_1(T^{n+1}, S_h)$ is estimated as
\begin{eqnarray}
\begin{aligned}
\label{lemma1.8}
&(\frac{T^{n+1}-T^n}{\Delta t}-T_t^{n+1},S_h)\leq \frac{\Delta t^2}{2\kappa_{min}\epsilon_9 }\|T_{tt}\|^2_{L^2(t^{n},t^{n+1};H^{-1}(\Omega))}+\frac{\epsilon_9 }{2}\|\sqrt{\kappa^n_h} S_h\|_1^2.
\end{aligned}
\end{eqnarray}

Combining the above estimates (\ref{lemma1.3})-(\ref{lemma1.8}) and using $\|\nabla S_h\|\leq \|S_h\|_1$ yields the result (\ref{lemma2.2}).
\end{proof}
With the consistency error now analyzed, we can now prove the major convergence result.
\begin{theorem}\label{theorem 2}
Suppose T satisfies the assumptions of Lemma \ref{lemma2}.  Moreover, suppose $T^{0}_{h} \in Y_h$ is an approximation of $T^0$ to within the accuracy of the interpolant.  Then, $\exists \ C_{\dagger}$ such that scheme (\ref{quan1}) satisfies

\begin{eqnarray}
\begin{aligned}\label{dingli2.0}
\|e^{N}\|^2 + \Delta t\|\sqrt{\kappa_{max}}\nabla e^{N}\|^2 + \sum^{N-1}_{n=0} \big(\|e^{n+1}-e^{n}\|^2 + \Delta t\|\sqrt{\kappa^{\prime n}_h}\nabla (e^{n+1}-e^{n})\|^2\big)
\\ + \frac{\Delta t}{2}\sum^{N-1}_{n=0} \|\sqrt{\kappa_h^n}\nabla e^{n+1}\|^2 \leq C\exp(C_{\dagger})\Big\{\big(\kappa_{min}^{-1} + \kappa_{max}^2\kappa_{min}^{-1} + 1 + \Delta t\big)h^{2k+2}
\\ + \big(\kappa_{min}^{-1} + \kappa_{max}(1+\Delta t + \Delta t^2)\big)h^{2k} + \big(1 + \kappa_{min}^{-1} + \kappa_{max}^2 \kappa_{min}^{-1} + \kappa_{max}\big)\Delta t^2 \Big\}.
\end{aligned}
\end{eqnarray}

Moreover, scheme (\ref{quan2}) satisfies
\begin{eqnarray}
\begin{aligned}
\label{dingli2.01}
\|e^N\|^2+\|\sqrt{\kappa_{max}}\nabla e^{N}\|^2+\sum^{N-1}_{n=0}\Big(\|e^{n+1}-e^n\|^2 + \Delta t\|\sqrt{\kappa^{\prime n}_h}\nabla (e^{n+1}-e^{n})\|^2\Big)\\
+\frac{C_P^2\Delta t}{8}\sum^{N-1}_{n=0}\|\sqrt{\kappa^n_{h}}\nabla e^{n+1}\|^2_1 \leq C\exp(C_{\dagger})\Big\{\big(\kappa_{min}^{-1} + \kappa_{max}^2\kappa_{min}^{-1} + 1 + \Delta t\big)h^{2k+2}
\\ + \big(\kappa_{min}^{-1} + \kappa_{max}(1+\Delta t + \Delta t^2)\big)h^{2k} + \big(1 + \kappa_{min}^{-1} + \kappa_{max}^2 \kappa_{min}^{-1} + \kappa_{max}\big)\Delta t^2 \Big\}.
\end{aligned}
\end{eqnarray}	
\end{theorem}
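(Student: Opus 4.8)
The plan is to transpose the energy argument of Theorem~\ref{theorem1} onto the error equations~(\ref{ef1}) and~(\ref{ef2}) via the splitting $e^{n}=\phi^{n}-\psi_{h}^{n}$, where $\phi^{n}=T^{n}-I_{h}T^{n}$ is the interpolation error and $\psi_{h}^{n}=T_{h}^{n}-I_{h}T^{n}\in Y_{h}$ (resp. $X_{h}$) is the finite element error. First I would test with $S_{h}=2\Delta t\,\psi_{h}^{n+1}$. Writing $e^{n+1}-e^{n}=(\phi^{n+1}-\phi^{n})-(\psi_{h}^{n+1}-\psi_{h}^{n})$ and $\kappa_{max}\nabla e^{n+1}-\kappa_{h}^{\prime n}\nabla e^{n}=[\kappa_{max}\nabla\psi_{h}^{n+1}-\kappa_{h}^{\prime n}\nabla\psi_{h}^{n}]-[\kappa_{max}\nabla\phi^{n+1}-\kappa_{h}^{\prime n}\nabla\phi^{n}]$, the $\psi_{h}$-contributions reproduce verbatim the polarization algebra of~(\ref{1.1})--(\ref{key2}) (and, for~(\ref{ef2}), of~(\ref{1.7})--(\ref{1.8})): on the left they contribute $\|\psi_{h}^{n+1}\|^{2}-\|\psi_{h}^{n}\|^{2}+\|\psi_{h}^{n+1}-\psi_{h}^{n}\|^{2}$, the telescoping diffusion term $\|\sqrt{\kappa_{max}}\nabla\psi_{h}^{n+1}\|^{2}-\|\sqrt{\kappa_{max}}\nabla\psi_{h}^{n}\|^{2}$, the enhanced-dissipation terms $\Delta t\|\sqrt{\kappa_{h}^{\prime n}}\nabla(\psi_{h}^{n+1}-\psi_{h}^{n})\|^{2}$ and $\Delta t\|\sqrt{\kappa_{h}^{n}}\nabla\psi_{h}^{n+1}\|^{2}+\Delta t\|\sqrt{\kappa_{h}^{n}}\nabla\psi_{h}^{n}\|^{2}$, and (for~(\ref{ef2})) $2\Delta t\|\sqrt{\alpha}\psi_{h}^{n+1}\|_{\partial\Omega}^{2}$. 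Everything else moves to the right.

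On the right there are then three types of terms. The interpolation remainders $2(\phi^{n+1}-\phi^{n},\psi_{h}^{n+1})$, $-2\Delta t(\kappa_{max}\nabla\phi^{n+1}-\kappa_{h}^{\prime n}\nabla\phi^{n},\nabla\psi_{h}^{n+1})$, and (for Robin) $2\Delta t(\alpha\phi^{n+1},\psi_{h}^{n+1})_{\partial\Omega}$ are bounded by Cauchy--Schwarz and Young's inequality: the first by writing $\phi^{n+1}-\phi^{n}=\int_{t^{n}}^{t^{n+1}}(I-I_{h})T_{t}\,ds$ and keeping $2\Delta t\|\psi_{h}^{n+1}\|^{2}$ as a Gronwall term, the second using~(\ref{xingzhiTb}) and absorbing a multiple of $\Delta t\|\sqrt{\kappa_{h}^{n}}\nabla\psi_{h}^{n+1}\|^{2}$, the third via a trace inequality and absorbing into $2\Delta t\|\sqrt{\alpha}\psi_{h}^{n+1}\|_{\partial\Omega}^{2}$; the approximation property~(\ref{bijin1}) then supplies the $h^{2k+2}$ and $h^{2k}$ contributions. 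The consistency error $2\Delta t\,\xi_{1}(T^{n+1},\psi_{h}^{n+1})$ is handled directly by Lemma~\ref{lemma2}, which delivers the $\Delta t^{2}$ data terms and, after splitting $\|e^{n}\|^{2}\le2\|\phi^{n}\|^{2}+2\|\psi_{h}^{n}\|^{2}$ in the $\epsilon_{6}$-term, a further $h^{2k+2}$ piece plus the Gronwall term $\frac{C_{\kappa}^{2}}{\kappa_{min}}\|\nabla T^{n}\|_{\infty}^{2}\|\psi_{h}^{n}\|^{2}$. I would then fix the Young parameters above and $\epsilon_{4},\dots,\epsilon_{9}$ of Lemma~\ref{lemma2} small enough that all terms proportional to $\Delta t\|\sqrt{\kappa_{h}^{n}}\nabla\psi_{h}^{n+1}\|^{2}$ (resp., for~(\ref{ef2}), $\Delta t\|\sqrt{\kappa_{h}^{n}}\psi_{h}^{n+1}\|_{1}^{2}$, after invoking Lemma~\ref{lemma_poincare} on $\|\sqrt{\kappa_{h}^{n}}\nabla\psi_{h}^{n+1}\|^{2}+\|\sqrt{\alpha}\psi_{h}^{n+1}\|_{\partial\Omega}^{2}$ exactly as in~(\ref{1.11})) are absorbed on the left with residual coefficient $\frac{\Delta t}{2}$, resp. $\frac{C_{P}^{2}\Delta t}{8}$.

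Summing from $n=0$ to $N-1$ telescopes the differenced norms; the initial-data hypothesis on $T_{h}^{0}$ yields $\|\psi_{h}^{0}\|^{2}\le Ch^{2k+2}$ and $\|\sqrt{\kappa_{max}}\nabla\psi_{h}^{0}\|^{2}\le C\kappa_{max}h^{2k}$. The resulting inequality has exactly the form required by Lemma~\ref{lemma_gronwall} with $a_{n}=\|\psi_{h}^{n}\|^{2}$, $b_{n}$ the dissipation terms, and $d_{n}$ a constant multiple of $\kappa_{min}^{-1}\|\nabla T\|_{L^{\infty}(0,t^{*};L^{\infty})}^{2}$ — finite by~(\ref{jiashe1})--(\ref{jiashe2}); applying it produces the factor $\exp(C_{\dagger})$ with $C_{\dagger}\approx C\kappa_{min}^{-1}t^{*}\|\nabla T\|_{L^{\infty}(L^{\infty})}^{2}$. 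Finally, $\|e^{n}\|\le\|\phi^{n}\|+\|\psi_{h}^{n}\|$ together with the analogous bounds for $\|\nabla e^{n}\|$ and $\|e^{n}\|_{1}$ and the estimate~(\ref{bijin1}) convert the $\psi_{h}$-estimate into~(\ref{dingli2.0}) and~(\ref{dingli2.01}), the interpolation errors contributing the remaining $h^{2k+2}$ and $h^{2k}$ powers.

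The main obstacle is the nonlinearity of $\kappa$: the rearrangement~(\ref{lemma1.3}) produces the term $-((\kappa^{n}-\kappa_{h}^{n})\nabla T^{n},\nabla S_{h})$, which the Lipschitz hypothesis~(\ref{xingzhiT}) controls only through $C_{\kappa}\|e^{n}\|\|\nabla T^{n}\|_{\infty}\|\nabla S_{h}\|$ — this is precisely what introduces a $\|\psi_{h}^{n}\|^{2}$ term on the right and forces the discrete Gronwall step (and explains why $\|\nabla T\|_{L^{\infty}(L^{\infty})}$ must appear in the regularity assumptions). A secondary difficulty is the $\epsilon$-bookkeeping needed to absorb every $\psi_{h}^{n+1}$-quadratic term with the sharp final constants; for scheme~(\ref{quan2}) this must be carried out simultaneously with the Poincar\'{e}-type step, since the boundary coercivity $2\Delta t\|\sqrt{\alpha}\psi_{h}^{n+1}\|_{\partial\Omega}^{2}$ is consumed both in recovering full $H^{1}$ control and in absorbing the Robin interpolation remainder.
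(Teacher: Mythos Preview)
Your plan matches the paper's proof almost step for step: the $e^{n}=\phi^{n}-\psi_{h}^{n}$ splitting, testing with $S_{h}=2\Delta t\,\psi_{h}^{n+1}$, the polarization algebra of Theorem~\ref{theorem1}, Lemma~\ref{lemma2} for the consistency error, the split $\|e^{n}\|^{2}\le 2\|\phi^{n}\|^{2}+2\|\psi_{h}^{n}\|^{2}$, and the discrete Gronwall step with $C_{\dagger}$ built from $\kappa_{min}^{-1}\|\nabla T\|_{L^{\infty}(L^{\infty})}^{2}$ are all exactly what the paper does.

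The one technical deviation worth noting is your treatment of $2(\phi^{n+1}-\phi^{n},\psi_{h}^{n+1})$: you propose to leave $\Delta t\|\psi_{h}^{n+1}\|^{2}$ as a Gronwall term, but after summation this contributes $\Delta t\|\psi_{h}^{N}\|^{2}$, which Lemma~\ref{lemma_gronwall} (whose sum runs only to $N-1$) does not absorb without a mild restriction like $\Delta t<1$. The paper instead applies the Poincar\'{e}--Friedrichs inequality~(\ref{lemma1_2.2}) to bound $\|\psi_{h}^{n+1}\|$ by $\|\nabla\psi_{h}^{n+1}\|$ and absorbs the resulting $\epsilon\Delta t\|\sqrt{\kappa_{h}^{n}}\nabla\psi_{h}^{n+1}\|^{2}$ into the left-hand dissipation (see~(\ref{dingli2.5})); this keeps the only Gronwall driver at index $n$ (from the $\epsilon_{6}$-term of Lemma~\ref{lemma2}) and preserves the unconditional character of the estimate.
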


\begin{proof}	
Using $e^{n}=\phi^n-\psi^n_h$, rearrange the error equation (\ref{ef1}) with $S_{h}=2\Delta t \psi_h^{n+1}$
\begin{eqnarray}
\begin{aligned}
\label{dingli2.4}
\|\psi^{n+1}_h\|^2-\|\psi_h^n\|^2+\|\psi^{n+1}_h-\psi_h^n\|^2+2\Delta t \|\sqrt{\kappa_{max}}\nabla \psi^{n+1}_h\|^2 - 2\Delta t(\kappa_h^{\prime n}\nabla \psi_h^n,\nabla \psi^{n+1}_h)\\
=
2\Delta t (\frac{\phi^{n+1}-\phi^n}{\Delta t},\psi_h^{n+1}) + 2\Delta t (\kappa_{max}\nabla \phi^{n+1},\psi^{n+1}_h) - 2\Delta t(\kappa_h^{\prime n}\nabla \phi_h^n,\nabla \psi^{n+1}_h)
\\- \xi_1(T^{n+1},2\Delta t \psi_h^{n+1}).
\end{aligned}
\end{eqnarray}
Recall equations (\ref{key1}) and (\ref{key2}) from Theorem \ref{theorem1}, we proceed in similar fashion so that after applications of the polarization identity we arrive at
\begin{eqnarray}
\begin{aligned}\label{dingli2.3}
\|\psi_h^{n+1}\|^2-\|\psi_h^{n}\|^2+\|\psi_h^{n+1}-\psi_h^{n}\|^2 + \|\sqrt{\kappa_{max}}\nabla \psi^{n+1}_h\|^2 - \|\sqrt{\kappa_{max}}\nabla \psi^{n}_h\|^2 + \Delta t\|\sqrt{\kappa^{\prime n}_h}\nabla (\psi^{n+1}_h-\psi^{n}_h)\|^2
\\+ \Delta t\|\sqrt{\kappa_h^n}\nabla \psi^{n+1}_h\|^2 + \Delta t\|\sqrt{\kappa_h^n}\nabla \psi^{n}_h\|^2 = 2\Delta t (\frac{\phi^{n+1}-\phi^n}{\Delta t},\psi_h^{n+1}) + 2\Delta t (\kappa_{max}\nabla \phi^{n+1},\psi^{n+1}_h)
\\- 2\Delta t(\kappa_h^{\prime n}\nabla \phi_h^n,\nabla \psi^{n+1}_h) - 2\Delta t \xi_1(T^{n+1},\psi_h^{n+1}).
\end{aligned}
\end{eqnarray}
Now, application of Taylor's Theorem with integral remainder, the Cauchy-Schwarz inequality, Lemma \ref{lemma_poincare}, and Young's inequality on the first term on the right-hand-side of (\ref{dingli2.4}) yields
\begin{eqnarray}
\begin{aligned}
\label{dingli2.5}
2\Delta t (\frac{\phi^{n+1}-\phi^n}{\Delta t},\psi_h^{n+1})
\leq \frac{C_{PF}^2}{\epsilon_{10}\kappa_{min}}\|\phi_t\|^2_{L^2(t^n,t^{n+1},L^2(\Omega))} + \epsilon_{10} \Delta t\|\sqrt{\kappa_h^n}\nabla\psi_h^{n+1}\|^2.
\end{aligned}
\end{eqnarray}
Applying the Cauchy-Schwarz and Young inequalities to the second and third terms yield
\begin{eqnarray}
\begin{aligned}
\label{dingli2.6}
2\Delta t (\kappa_{max}\nabla \phi^{n+1}, \nabla \psi_h^{n+1}) \leq \frac{\kappa_{max}^2\Delta t}{\epsilon_{11}\kappa_{min}}\|\nabla\phi^{n+1}\|^2 + \epsilon_{11} \Delta t\|\sqrt{\kappa^n_h}\nabla\psi_h^{n+1}\|^2,
\\ 2\Delta t (\kappa^{\prime n}\nabla \phi^{n}, \nabla \psi_h^{n+1}) \leq \frac{\kappa_{max}^2\Delta t}{\epsilon_{12}\kappa_{min}}\|\nabla\phi^{n}\|^2 + \epsilon_{12} \Delta t\|\sqrt{\kappa^n_h}\nabla\psi_h^{n+1}\|^2.
\end{aligned}
\end{eqnarray}

Using the above and Lemma \ref{lemma2} in equation (\ref{dingli2.3}), with $\epsilon_i = 1/20$ for $i=4,5,6,7,8$ and $\epsilon_{i} = 1/12$ for $i=10,11,12$, yields
\begin{eqnarray}
\begin{aligned}
\label{dingli2.7}
\|\psi_h^{n+1}\|^2-\|\psi_h^{n}\|^2+\|\psi_h^{n+1}-\psi_h^{n}\|^2 + \|\sqrt{\kappa_{max}}\nabla \psi^{n+1}_h\|^2 - \|\sqrt{\kappa_{max}}\nabla \psi^{n}_h\|^2 + \Delta t\|\sqrt{\kappa^{\prime n}_h}\nabla (\psi^{n+1}_h-\psi^{n}_h)\|^2 
\\ + \frac{\Delta t}{2}\|\sqrt{\kappa_h^n}\nabla \psi^{n+1}_h\|^2 + \Delta t\|\sqrt{\kappa_h^n}\nabla \psi^{n}_h\|^2 \leq \frac{12C_{PF}^2}{\kappa_{min}}\|\phi_t\|^2_{L^2(t^n,t^{n+1},L^2(\Omega))} + \frac{12\kappa_{max}^2\Delta t}{\kappa_{min}}\big(\|\nabla\phi^{n+1}\|^2 + \|\nabla\phi^{n}\|^2\big)
\\ + \frac{10C_{PF}^2\Delta t^2}{\kappa_{min}}\|T_{tt}\|^2_{L^2(t^{n},t^{n+1};L^{2}(\Omega))} + \frac{10C^2_{\kappa}\Delta t}{\kappa_{min}}\|\nabla T^{n}\|^2_{\infty}\|e^n\|^2 + \frac{10C^2_{\kappa} \Delta t^2}{\kappa_{min}}\|\nabla T^{n+1}\|^2_{\infty}\|T_t\|^2_{L^2(t^{n},t^{n+1};L^2(\Omega))}
\\+ \frac{5\kappa_{max}^2\Delta t^2}{\kappa_{min}}\|\nabla T_t\|^2_{L^2(t^{n},t^{n+1};L^2(\Omega))}.
\end{aligned}
\end{eqnarray}

Now, note that $\|e^n\|^2 \leq 2\|\psi^n\|^2 + 2\|\psi_h^n\|^2$, drop $\Delta t\|\sqrt{\kappa_h^n}\nabla \psi^{n}_h\|^2$, sum from $n=0$ to $n=N-1$, and rearrange terms to arrive at

\begin{eqnarray}
\begin{aligned}
\label{dingli2.8}
\|\psi_h^{N}\|^2 + \Delta t\|\sqrt{\kappa_{max}}\nabla\psi_h^{N}\|^2 + \sum^{N-1}_{n=0} \big(\|\psi_h^{n+1}-\psi_h^{n}\|^2 + \Delta t\|\sqrt{\kappa^{\prime n}_h}\nabla (\psi^{n+1}_h-\psi^{n}_h)\|^2\big)
\\ + \frac{\Delta t}{2}\sum^{N-1}_{n=0} \|\sqrt{\kappa_h^n}\nabla \psi^{n+1}_h\|^2
\leq \frac{20C^2_{\kappa}}{\kappa_{min}}\sum^{N-1}_{n=0}\|\nabla T^{n}\|^2_{\infty}\|\psi^n_h\|^2 + \frac{12C_{PF}^2}{\kappa_{min}}\|\phi_t\|^2_{L^2(0,t^{\ast},L^2(\Omega))} + \frac{24\kappa_{max}^2}{\kappa_{min}} \vertiii{\nabla\phi}^2_{2,0}
\\ + \frac{10C_{PF}^2\Delta t^2}{\kappa_{min}}\|T_{tt}\|^2_{L^2(0,t^{\ast};L^{2}(\Omega))} + \frac{10C^2_{\kappa}}{\kappa_{min}}\max_{0\leq n \leq N}\|\nabla T^{n}\|^2_{\infty}\big(2\vertiii{\phi}^2_{2,0} + \Delta t^{2}\|T_t\|^2_{L^2(0,t^{\ast};L^2(\Omega))}\big)
\\+ \frac{5\kappa_{max}^2\Delta t^2}{\kappa_{min}}\|\nabla T_t\|^2_{L^2(0,t^{\ast};L^2(\Omega))} + \|\psi_h^{0}\|^2 + \Delta t\|\sqrt{\kappa_{max}}\nabla\psi_h^{0}\|^2.
\end{aligned}
\end{eqnarray}

Collecting constants, application of Lemma \ref{lemma_gronwall}, and a rearrangement yield
\begin{eqnarray}
\begin{aligned}
\label{dingli2.9}
\|\psi_h^{N}\|^2 + \Delta t\|\sqrt{\kappa_{max}}\nabla\psi_h^{N}\|^2 + \sum^{N-1}_{n=0} \big(\|\psi_h^{n+1}-\psi_h^{n}\|^2 + \Delta t\|\sqrt{\kappa^{\prime n}_h}\nabla (\psi^{n+1}_h-\psi^{n}_h)\|^2\big)+ \frac{\Delta t}{2}\sum^{N-1}_{n=0} \|\sqrt{\kappa_h^n}\nabla \psi^{n+1}_h\|^2
\\ \leq C\exp\Big(\frac{20C^2_{\kappa}}{\kappa_{min}}\Delta t\sum^{N-1}_{n=0}\|\nabla T^{n}\|^2_{\infty}\Big)\Big\{\kappa_{min}^{-1}\Big(\|\phi_t\|^2_{L^2(0,t^{\ast},L^2(\Omega))} + \kappa_{max}^2 \vertiii{\phi}^2_{2,0} + \max_{0\leq n \leq N}\|\nabla T^{n}\|^2_{\infty}\vertiii{\nabla\phi}^2_{2,0}\Big)
\\ + \Delta t^2\Big(\kappa_{min}^{-1}\|T_{tt}\|^2_{L^2(0,t^{\ast};L^{2}(\Omega))} + \kappa_{min}^{-1}\max_{0\leq n \leq N}\|\nabla T^{n}\|^2_{\infty}\|T_t\|^2_{L^2(0,t^{\ast};L^2(\Omega))} + \kappa_{max}^2\kappa_{min}^{-1}\|\nabla T_t\|^2_{L^2(0,t^{\ast};L^2(\Omega))}\Big)
\\ + \|\psi_h^{0}\|^2 + \Delta t\|\sqrt{\kappa_{max}}\nabla\psi_h^{0}\|^2\Big\}.
\end{aligned}
\end{eqnarray}

Denote $C_{\dagger}=\frac{20C^2_{\kappa}}{\kappa_{min}}\Delta t\sum^{N-1}_{n=0}\|\nabla T^{n}\|^2_{\infty}$, take an infimum over $Y_{h}$, apply approximation property (\ref{bijin1}), and collect constants.  Then,
\begin{eqnarray}
\begin{aligned}
\label{dingli2.10}
\|\psi_h^{N}\|^2 + \Delta t\|\sqrt{\kappa_{max}}\nabla\psi_h^{N}\|^2 + \sum^{N-1}_{n=0} \big(\|\psi_h^{n+1}-\psi_h^{n}\|^2 + \Delta t\|\sqrt{\kappa^{\prime n}_h}\nabla (\psi^{n+1}_h-\psi^{n}_h)\|^2\big) + \frac{\Delta t}{2}\sum^{N-1}_{n=0} \|\sqrt{\kappa_h^n}\nabla \psi^{n+1}_h\|^2 
\\\leq C\exp(C_{\dagger})\Big\{\kappa_{min}^{-1}\big(1 + \kappa_{max}^2 + \kappa_{min}\big)h^{2k+2} + \kappa_{min}^{-1} h^{2k} + \big(1 + \kappa_{min}^{-1} + \kappa_{max}^2 \kappa_{min}^{-1} + \kappa_{max}\Big)\Delta t^2 \big\}.
\end{aligned}
\end{eqnarray}
Application of the triangle inequality yields the result, estimate (\ref{dingli2.0}).  For the latter result (\ref{dingli2.01}) pertaining to Robin boundary conditions, we have the following estimate for the boundary term: 
\begin{eqnarray}
\begin{aligned}
\label{dingli2.17}
&2\Delta t (\alpha\phi^{n+1},\psi^{n+1}_h)_{\partial\Omega}\leq \frac{\|\alpha\|_{L^{\infty}(\partial\Omega)}^2\Delta t}{2\kappa_{min}\epsilon_0}\|\phi_h^{n+1}\|^2_{-1,\partial \Omega} + \frac{\epsilon_0\Delta t}{2}\| \sqrt{\kappa^n_h}\psi_h^{n+1}\|^2_1.
\end{aligned}
\end{eqnarray}

Using the above estimate, noting that $\|\nabla S\| \leq \|S\|_1$ and selecting $\epsilon_i = C_P^2/40$ for $i=5,6,7,8,9$ and $\epsilon_{i} = C_P^2/16$ for $i=0,10,11,12$, leads to the result.
\end{proof}
The above Theorem allows for Lagrange elements of arbitrary polynomial order to be used.  However, if $P1$ is used, the optimal, first-order accuracy is achieved with $h = \mathcal{O}(\Delta t)$.
\begin{corollary}
Suppose the finite element space $Z = Y_h$ or $X_h$ is given by $P1$.  Then, under the assumptions of Theorem $\ref{theorem 2}$, the error for Algorithms 1(a) and (b) satisfy
\begin{eqnarray}
\begin{aligned}
\label{last1}
\|e^{N}\|^2 + \Delta t\|\sqrt{\kappa_{max}}\nabla e^{N}\|^2 + \sum^{N-1}_{n=0} \big(\|e^{n+1}-e^{n}\|^2 + \Delta t\|\sqrt{\kappa^{\prime n}_h}\nabla (e^{n+1}-e^{n})\|^2\big)+ \frac{\Delta t}{2}\sum^{N-1}_{n=0} \|\sqrt{\kappa_h^n}\nabla e^{n+1}\|^2
\\ \leq C(h^2 + \Delta t^2)
\end{aligned}
\end{eqnarray}
and 
\begin{eqnarray}
\begin{aligned}
\label{last2}
\|e^{N}\|^2 + \Delta t\|\sqrt{\kappa_{max}}\nabla e^{N}\|^2 + \sum^{N-1}_{n=0} \big(\|e^{n+1}-e^{n}\|^2 + \Delta t\|\sqrt{\kappa^{\prime n}_h}\nabla (e^{n+1}-e^{n})\|^2\big) + \frac{C_P^2\Delta t}{8}\sum^{N-1}_{n=0} \|\sqrt{\kappa_h^n}\nabla e^{n+1}\|^2
\\ \leq C(h^2 + \Delta t^2).
\end{aligned}
\end{eqnarray}
\end{corollary}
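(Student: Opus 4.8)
The plan is to obtain the corollary as an immediate specialization of Theorem \ref{theorem 2}: with $P1$ elements one has $k=1$, so the interpolation rates $h^{k+1}$ in $L^2$ and $h^k$ in $H^1$ from (\ref{bijin1}) are $h^2$ and $h$; consequently the powers $h^{2k+2}$ and $h^{2k}$ on the right-hand sides of (\ref{dingli2.0}) and (\ref{dingli2.01}) become $h^4$ and $h^2$, and the estimates read, for both Algorithm 1(a) and 1(b),
\begin{align*}
C\exp(C_{\dagger})\Big\{&\big(\kappa_{min}^{-1} + \kappa_{max}^2\kappa_{min}^{-1} + 1 + \Delta t\big)h^{4} + \big(\kappa_{min}^{-1} + \kappa_{max}(1+\Delta t + \Delta t^2)\big)h^{2} \\ &+ \big(1 + \kappa_{min}^{-1} + \kappa_{max}^2 \kappa_{min}^{-1} + \kappa_{max}\big)\Delta t^2 \Big\}.
\end{align*}

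The next step is to recognize that every factor here other than $h$ and $\Delta t$ is bounded independently of the mesh and timestep, so it may be absorbed into the generic constant $C$. In particular, $C_{\dagger} = \frac{20C^2_{\kappa}}{\kappa_{min}}\Delta t\sum^{N-1}_{n=0}\|\nabla T^{n}\|^2_{\infty} \leq \frac{20C^2_{\kappa}t^{*}}{\kappa_{min}}\|\nabla T\|^2_{L^{\infty}(0,t^{*};L^{\infty}(\Omega))}$ is finite and $h$-, $\Delta t$-independent thanks to the regularity hypotheses (\ref{jiashe1}) / (\ref{jiashe2}) of Lemma \ref{lemma2}, so $\exp(C_{\dagger})$ is a bona fide constant; likewise $\kappa_{min}$, $\kappa_{max}$, $C_{\kappa}$, $C_P$, $C_{PF}$ and the finitely many $\epsilon_i$ fixed in the proof of Theorem \ref{theorem 2}, together with the solution norms already hidden in $C$, are fixed numbers. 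Since $0<h<1$ gives $h^4\leq h^2$ and $0<\Delta t\leq t^{*}$ keeps $(1+\Delta t)$ and $(1+\Delta t+\Delta t^2)$ bounded, the whole right-hand side is $\leq C(h^2+\Delta t^2)$.

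It remains only to compare the left-hand sides. For Algorithm 1(a) the left-hand side of (\ref{last1}) is exactly that of (\ref{dingli2.0}), so (\ref{last1}) follows. For Algorithm 1(b), the left-hand side of (\ref{last2}) is term-by-term no larger than that of (\ref{dingli2.01}) --- one uses $\|v\|\leq\|v\|_1$ on the $\sum\|\sqrt{\kappa_h^n}\nabla e^{n+1}\|^2$ sum and $\Delta t\leq t^{*}$ on the $\|\sqrt{\kappa_{max}}\nabla e^{N}\|^2$ term --- so (\ref{last2}) follows as well.

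The argument is routine and I expect no real obstacle; the single point that genuinely needs the hypotheses is the uniform boundedness of $C_{\dagger}$, and hence of the Gronwall factor $\exp(C_{\dagger})$, which is exactly where the $L^{\infty}(0,t^{*};L^{\infty}(\Omega))$ bound on $\nabla T$ is used --- absent that control the exponential prefactor could in principle deteriorate as $h\to0$ and spoil the claimed $\mathcal{O}(h^2+\Delta t^2)$ rate.
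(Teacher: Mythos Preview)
Your proposal is correct and matches the paper's approach: the corollary is stated without proof in the paper, being an immediate specialization of Theorem~\ref{theorem 2} with $k=1$, and your argument spells out precisely this specialization (absorbing $\exp(C_\dagger)$ and the $\kappa$-dependent prefactors into $C$, then using $h^4\leq h^2$). The one minor wording issue is the handling of the $\Delta t\|\sqrt{\kappa_{max}}\nabla e^{N}\|^2$ term in (\ref{last2}) versus $\|\sqrt{\kappa_{max}}\nabla e^{N}\|^2$ in (\ref{dingli2.01}): rather than ``$\Delta t\leq t^*$ on the term,'' the clean statement is that since $\|\sqrt{\kappa_{max}}\nabla e^{N}\|^2\leq C(h^2+\Delta t^2)$ already, multiplying by the bounded factor $\Delta t\leq t^*$ preserves the estimate with a new constant.
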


\section{Numerical Experiments}
% In this section, we illustrate both stability and convergence properties of the numerical schemes (\ref{quan1}) - (\ref{quan2b}) using P1 Lagrange elements.  The first numerical experiment tests for convergence and considers the effects of ensemble and perturbation sizes, where an analytical solution is constructed via the method of manufactured solutions.  It is found that the numerical methods (\ref{quan1}) and (\ref{quan2}) are first- and second-order accurate, respectively, in the appropriate norms.  Moreover, the ensemble and perturbation sizes have little effect on the accuracy in this setting.  The second numerical experiment is a 3D printing application in the spirit of the work by Vora and Dahotre \cite{Vora}.  A simple application to uncertainty quantification, calculation of error envelopes, is illustrated.  The software used for all tests is \textsc{FreeFem}$++$ \cite{Hecht}.

% \begin{table}\centering
% 	\caption{Errors and rates for the second-order method.} 
% 	\begin{tabular}{ c  c  c  c  c }
% 		\hline			
% 		$m$ & $\vertiii{ <e>  }_{\infty,0}$ & Rate & $\vertiii{ \nabla <e> }_{2,0}$ & Rate \\
% 		\hline
% 		4 & 6.80E-04 & - & 1.33E-02 & - \\
% 		8 & 6.19E-05 & 3.46 & 2.38E-03 & 2.48 \\
% 		16 & 5.53E-06 & 3.49 & 4.35E-04 & 2.45 \\
% 		32 & 6.61E-07 & 3.06 & 1.02E-04 & 2.09 \\
% 		64 & 8.94E-08 & 2.88 & 2.76E-05 & 1.89 \\
% 		\hline  
% 	\end{tabular}\label{Table=second}
% \end{table}

% \subsection{Numerical convergence study}
In this section, we illustrate the stability and convergence of the numerical schemes (\ref{quan1}) and (\ref{quan2}) using P1 Lagrange elements to approximate temperature distributions.  The first numerical experiment tests for convergence and considers the effects of ensemble and perturbation sizes, where an analytical solution is constructed via the method of manufactured solutions.  From this test it is shown that the numerical methods (\ref{quan1}) and (\ref{quan2}) are first-order accurate in the appropriate norms.  Moreover, the ensemble and perturbation sizes have little effect on the accuracy in this setting.  The next numerical experiment is a 3D printing application in the spirit of the work by Vora and Dahotre \cite{Vora} using a temperature-dependent thermal conductivity that resembles data from \cite{kolev2015}.  A simple application to uncertainty quantification, calculation of error envelopes, is illustrated using temperature-dependent thermal conductivity.  The final validation experiment is the steady-state solution of a nonlinear heat transfer problem from \cite{singh2006numerical} which is compared to a given analytical solution. The software used for all tests is \textsc{FreeFem}$++$ \cite{Hecht}.

% \begin{table}\centering
% 	\caption{Errors and rates for algorithm (\ref{quan1})} 
% 	\begin{tabular}{ c  c  c  c  c }
% 		\hline			
% 		$m$ & $\vertiii{ <e>  }_{\infty,0}$ & Rate & $\vertiii{ \nabla <e> }_{2,0}$ & Rate \\
% 		\hline
% 		4 & 1.84E-02 & - & 2.55E-01 & - \\
% 		8 & 4.72E-03 & 1.97 & 1.28E-01 & 1.00 \\
% 		16 & 1.15E-03 & 2.04 & 5.91E-02 & 1.12 \\
% 		32 & 3.12E-04 & 1.89 & 3.03E-02 & 0.97 \\
% 		64 & 9.12E-05 & 1.78 & 1.55E-02 & 0.97 \\
% 		\hline  
% 	\end{tabular}\label{Table=first}
% \end{table}

% \begin{table}\centering
% 	\caption{Errors and rates for algorithm (\ref{quan2})} 
% 	\begin{tabular}{ c  c  c  c  c }
% 		\hline			
% 		$m$ & $\vertiii{ <e> }_{\infty,0}$ & Rate & $\vertiii{ \nabla <e> }_{2,0}$ & Rate \\
% 		\hline
% 		4 & 1.91E-02 & - & 2.50E-01 & - \\
% 		8 & 4.59E-03 & 2.06 & 1.29E-01 & 0.96 \\
% 		16 & 1.24E-03 & 1.90 & 5.93E-02 & 1.12 \\
% 		32 & 4.48E-04 & 1.47 & 3.04E-02 & 0.97 \\
% 		64 & 1.95E-04 & 1.20 & 1.55E-02 & 0.98 \\
% 		\hline  
% 	\end{tabular}\label{Table=second}
% \end{table}

\begin{table}\centering
	\caption{Errors and rates for algorithm (\ref{quan1})}. 
	\begin{tabular}{ c  c  c  c  c }
		\hline			
		$m$ & $\vertiii{ <e>  }_{\infty,0}$ & Rate & $\vertiii{ \nabla <e> }_{2,0}$ & Rate \\
		\hline
        4&	1.81E-02&	-&	2.55E-01&-	\\
        8&	4.37E-03&	2.06&	1.27E-01&	1.00 \\
        16&	1.11E-03&	1.98&	6.04E-02&	1.08 \\
        32&	3.13E-04&	1.83&	3.04E-02&	0.99 \\
        64&	9.07E-05&	1.79&	1.55E-02&	0.98 \\
		\hline  
	\end{tabular}\label{Table=first}
\end{table}

\begin{table}\centering
	\caption{Errors and rates for algorithm (\ref{quan2})}.
	\begin{tabular}{ c  c  c  c  c }
		\hline			
		$m$ & $\vertiii{ <e> }_{\infty,0}$ & Rate & $\vertiii{ \nabla <e> }_{2,0}$ & Rate \\
		\hline
        4&	1.85E-02&-&		2.50E-01&-	 \\
        8&	4.17E-03&	2.16&	1.29E-01&	0.96 \\
        16&	1.19E-03&	1.82&	6.07E-02&	1.09 \\
        32&	4.47E-04&	1.42&	3.05E-02&	1.00 \\
        64&	1.94E-04&	1.20	&1.55E-02	&0.98 \\
		\hline  
	\end{tabular}\label{Table=second}
\end{table}

% \begin{table}\centering
% 	\caption{Errors and rates for algorithm (\ref{quan1b})} 
% 	\begin{tabular}{ c  c  c  c  c }
% 		\hline			
% 		$m$ & $\vertiii{ <e>  }_{\infty,0}$ & Rate & $\vertiii{ \nabla <e> }_{2,0}$ & Rate \\
% 		\hline
% 		4 & 1.81E-02 & - & 2.55E-01 & - \\
% 		8 & 4.63E-03 & 1.97 & 1.28E-01 & 1.00 \\
% 		16 & 1.13E-03 & 2.03 & 5.91E-02 & 1.12 \\
% 		32 & 3.09E-04 & 1.88 & 3.03E-02 & 0.97 \\
% 		64 & 9.03E-05 & 1.78 & 1.55E-02 & 0.97 \\
% 		\hline  
% 	\end{tabular}\label{Table=firstb}
% \end{table}

% \begin{table}\centering
% 	\caption{Errors and rates for algorithm (\ref{quan2b})} 
% 	\begin{tabular}{ c  c  c  c  c }
% 		\hline			
% 		$m$ & $\vertiii{ <e>  }_{\infty,0}$ & Rate & $\vertiii{ \nabla <e> }_{2,0}$ & Rate \\
% 		\hline
% 		4 & 1.85E-02 & - & 2.50E-01 & - \\
% 		8 & 4.47E-03 & 2.06 & 1.29E-01 & 0.96 \\
% 		16 & 1.21E-03 & 1.89 & 5.93E-02 & 1.12 \\
% 		32 & 4.40E-04 & 1.46 & 3.04E-02 & 0.97 \\
% 		64 & 1.93E-04 & 1.20 & 1.55E-02 & 0.98 \\
% 		\hline  
% 	\end{tabular}\label{Table=secondb}
% \end{table}

\subsection{Numerical convergence study}
For the first numerical experiment we will illustrate the convergence rates for the proposed algorithms (\ref{quan1}) and (\ref{quan2}).  Let the domain $\Omega$ be the unit square $[0,1]^{2}$ and final time $t^{*} = 1$; see Figure \ref{Figure=domain} for the domain and boundary conditions.  Let $c = -0.1$, with $J = 4$ and $T(x,y,t,\omega_{j}) = (1 + \epsilon_{j})T(x,y,t)$, where $\epsilon_{j} = \mathcal{O}(10^{-1})$ for $1\leq j \leq 4$.  The manufactured solution and thermal conductivity are
\begin{align*}
T(x,y,t) &= 20\cos(t)\left(\cos(x(x-1))\sin(y(y-1)) - y(y-1)\right)
\\ \kappa(T) &= \exp(c T),
\end{align*}
where both the heat source and boundary terms are adjusted appropriately. For algorithm (\ref{quan2}), the Robin boundary condition is used with $\alpha=1/2$ and appropriate $\beta$.\\

\textbf{Remark:}  The perturbations are randomly generated.  For the first test, they are 0.9578666373, 0.9721124752, 0.35623152985, and 0.4332194024.\\

The finite element mesh $\Omega_{h}$ is a Delaunay triangulation generated from $m$ points on each side of $\Omega$.  P1 Lagrange elements are used.  We calculate the error in the approximations of the average temperature with the $L^{\infty}(0,t^{\ast};L^{2}(\Omega))$ and $L^{2}(0,t^{\ast};H^{1}(\Omega))$ norms.  Rates are calculated from the errors at two $\Delta t_{1,2}$ via
\begin{align*}
\frac{\log_{2}(e(\Delta t_{1})/e(\Delta t_{2}))}{\log_{2}(\Delta t_{1}/\Delta t_{2})}.
\end{align*}
We set $\Delta t = 0.5/m$ and vary $m$ between 4, 8, 16, 32, and 64.  Results are presented in Tables \ref{Table=first} and \ref{Table=second}.  We see first-order convergence in the $L^{\infty}(0,t^{\ast};L^{2}(\Omega))$ norm and first-order convergence in the $L^{2}(0,t^{\ast};H^{1}(\Omega))$ norm for each algorithm. These results are as expected based on the convergence analysis, Theorem \ref{theorem 2}.

For the second test, we study the effect of the size of the perturbation on convergence.  We repeat the above convergence test, changing only the perturbation size.  For $1\leq j \leq 4$, let $\epsilon_{j} = \mathcal{O}(10^{-l})$ for $l = 0, 1, 2, 3$, and 4.  The errors of the average solution in $L^{\infty}(0,t^{\ast};L^{2}(\Omega))$ are presented in Tables \ref{Table=perturb1} and \ref{Table=perturb2} for methods (\ref{quan1}) and (\ref{quan2}), respectively.  We see that as the perturbation size is reduced, the results increasingly agree with one another.  Notably, the algorithm remains stable irrespective of the perturbation size, consistent with Theorem \ref{theorem1}.

Finally, we investigate the effect of the ensemble size J. We fix $m=16$, $\Delta{t}=0.5/m$, $\epsilon_j = \mathcal{O}(10^{-1})$, and then let $J$ vary from 1, 2, 4, 8, 16, 32, and 64. The associated average errors are calculated and plotted, Figure \ref{Figure=Jerror}. Once again, results are consistent with the theory.

\begin{table}\centering
	\caption{Comparison of $\vertiii{<e>}_{\infty,0}$ with algorithm (\ref{quan1}), varying perturbation size, $\epsilon$.} 
	\begin{tabular}{ c  c  c  c  c c  c  c  c	c}
		\hline
		\multicolumn{1}{c} {Mesh} & \multicolumn{5}{c} {Perturbation Size} \\			
		$m$ &  $\mathcal{O}(1)$ & $\mathcal{O}(10^{-1})$ & $\mathcal{O}(10^{-2})$ & $\mathcal{O}(10^{-3})$ & $\mathcal{O}(10^{-4})$ \\
		\hline
        4&	8.16E-02&	1.81E-02&	1.20E-02&	1.14E-02&	1.13E-02 \\
        8&	1.94E-02&	4.37E-03&	2.89E-03&	2.74E-03&	2.73E-03 \\
        16&	4.92E-03&	1.11E-03&	7.35E-04&	6.97E-04&	6.94E-04 \\
        32&	1.37E-03&	3.13E-04&	2.08E-04&	1.97E-04&	1.96E-04 \\
        64&	3.83E-04&	9.07E-05&	6.05E-05&	5.75E-05&	5.71E-05 \\
		\hline  
	\end{tabular}\label{Table=perturb1}
\end{table}

\begin{table}\centering
	\caption{Comparison of $\vertiii{<e>}_{\infty,0}$ with algorithm (\ref{quan2}), varying perturbation size, $\epsilon$.} 
	\begin{tabular}{ c  c  c  c  c c  c  c  c	c}
		\hline
		\multicolumn{1}{c} {Mesh} & \multicolumn{5}{c} {Perturbation Size} \\			
		$m$ &  $\mathcal{O}(1)$ & $\mathcal{O}(10^{-1})$ & $\mathcal{O}(10^{-2})$ & $\mathcal{O}(10^{-3})$ & $\mathcal{O}(10^{-4})$ \\
		\hline
        4&	8.27E-02&	1.85E-02&	1.23E-02&	1.16E-02&	1.16E-02 \\
        8&	1.85E-02&	4.17E-03&	2.76E-03&	2.62E-03&	2.61E-03 \\
        16&	4.93E-03&	1.19E-03&	7.92E-04&	7.52E-04&	7.48E-04 \\
        32&	1.76E-03&	4.47E-04&	2.99E-04&	2.84E-04&	2.83E-04 \\
        64&	7.63E-04&	1.94E-04&	1.30E-04&	1.24E-04&	1.23E-04 \\
		\hline  
	\end{tabular}\label{Table=perturb2}
\end{table}

\begin{table}\centering
    \caption{Comparison of steady-state solutions with exact solution.}
    \begin{tabular}{l l c c c c c c}
        \hline
        \multicolumn{2}{l} {Position} & & \multicolumn{2}{c} {$m=8$} & \multicolumn{2}{c} {$m=16$} \\
        $x$  & $y$ & & $T$ & \% Error & $T$ & \% Error & Analytical  \\\hline
        0.25 & 0.50 & &161.939 & 0.022 & 161.919 & 0.009 & 161.904 \\
        0.375 & 0.630 & &143.281 & 0.020 & 143.259 & 0.004 & 143.253 \\
        0.50 & 0.50 & &132.309 & 0.016 & 132.293 & 0.004 & 132.288 \\
        0.50 & 0.75 & &124.361 & 0.015 & 124.342 & 0.000 & 124.342 \\
        0.625 & 0.625 & &120.343 & 0.013 & 120.332 & 0.003 & 120.328 \\
        0.75 & 0.50 & &113.423 & 0.009 & 113.415 & 0.002 & 113.413 \\
        0.75 & 0.75 & &109.731 & 0.007 & 109.725 & 0.002 & 109.723 \\
        0.25 & 0.75 & &151.584 & 0.043 & 151.541 & 0.015 & 151.519 \\\hline
    \end{tabular}
    \label{Table=stdystate}
\end{table}

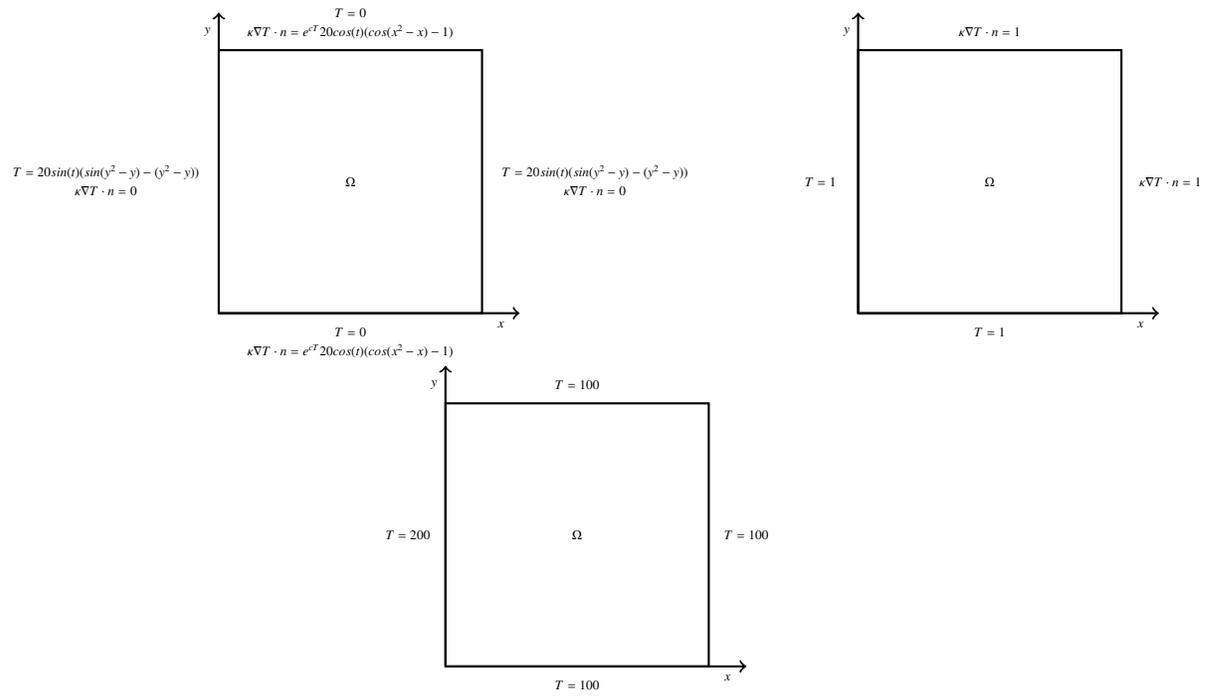
\begin{figure}
    \centering
    \begin{tikzpicture}[scale=0.5, every node/.style={scale=0.5}]
        
        \draw[->,thick] (0,0) -- (0,8);
        \draw[->,thick] (0,0) -- (8,0);
        
        \draw[black,thick] (0,0) rectangle (7,7);
        
        \node at (3.5,3.5) {$\Omega$};
        \node at (-0.3,7.5) {$y$};
        \node at (7.5,-0.3) {$x$};
        \node at (3.5,8.0) {$T = 0$};
        \node at (3.5,7.5) {$\kappa\nabla{T}\cdot{n} = e^{cT}20cos(t)(cos(x^2-x) - 1)$};
        \node at (3.5,-0.5) {$T = 0$};
        \node at (3.5,-1.0) {$\kappa\nabla{T}\cdot{n} = e^{cT}20cos(t)(cos(x^2-x) - 1)$};
        \node at (10.0,3.75) {$T = 20sin(t)(sin(y^2-y) - (y^2-y))$};
        \node at (10.0,3.25) {$\kappa\nabla{T}\cdot{n} = 0$};
        \node at (-3,3.75) {$T = 20sin(t)(sin(y^2-y) - (y^2-y))$};
        \node at (-3,3.25) {$\kappa\nabla{T}\cdot{n} = 0$};

        \draw[->,thick] (17,0) -- (17,8);
        \draw[->,thick] (17,0) -- (25,0);
        
        \draw[black,thick] (17,0) rectangle (24,7);
        
        \node at (20.5,3.5) {$\Omega$};
        \node at (16.7,7.5) {$y$};
        \node at (24.5,-0.3) {$x$};
        % \node at (21.5,8.0) {$T = 1$};
        \node at (20.5,7.5) {$\kappa\nabla{T}\cdot{n} = 1$};
        \node at (20.5,-0.5) {$T = 1$};
        % \node at (21.5,-1.0) {$\kappa\nabla{T}\cdot{n} = e^{cT}20cos(t)(cos(x^2-x) - 1)$};
        % \node at (27,3.75) {$T = 20sin(t)(sin(y^2-y) - (y^2-y))$};
        \node at (25.3,3.5) {$\kappa\nabla{T}\cdot{n} = 1$};
        \node at (16,3.5) {$T = 1$};
        % \node at (15,3.25) {$\kappa\nabla{T}\cdot{n} = 0$};
    \end{tikzpicture}
    
    \begin{tikzpicture}[scale=0.5, every node/.style={scale=0.5}]
        
        \draw[->,thick] (0,0) -- (0,8);
        \draw[->,thick] (0,0) -- (8,0);
        
        \draw[black,thick] (0,0) rectangle (7,7);
        
        \node at (3.5,3.5) {$\Omega$};
        \node at (-0.3,7.5) {$y$};
        \node at (7.5,-0.3) {$x$};
        \node at (3.5,7.5) {$T = 100$};
        \node at (3.5,-0.5) {$T = 100$};
        \node at (8.0,3.5) {$T = 100$};
        \node at (-1,3.5) {$T = 200$};
    
    \end{tikzpicture}
    \caption{Domain and boundary conditions for (left) convergence test manufactured solution, (right) 3D printing problem, and (bottom) steady-state solution.}
    \label{Figure=domain}
\end{figure}

\begin{figure}
    \centering
    \includegraphics[scale = 0.55]{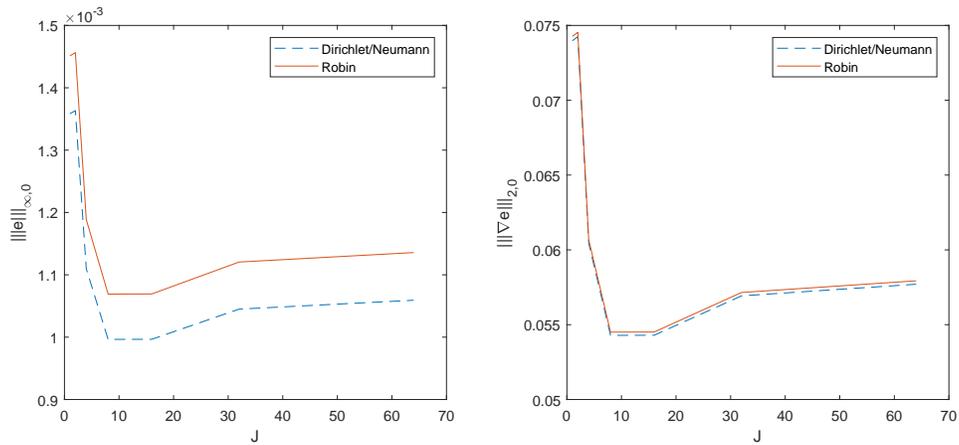}
    \caption{Errors versus ensemble size J.}
    \label{Figure=Jerror}
\end{figure}

\begin{figure}
    \centering
    \includegraphics[scale = 0.75]{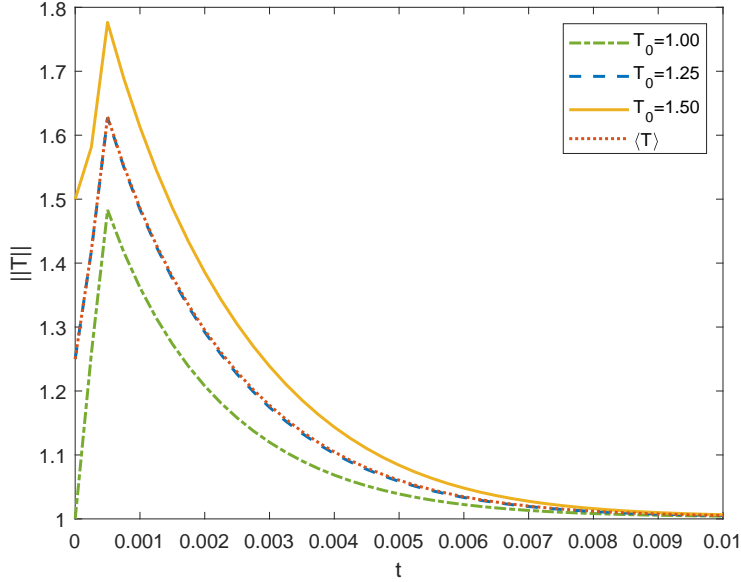}
    \caption{Variation of $||T||$ with time.}
    \label{Figure=printingex}
\end{figure}

\subsection{3D printing application}
We now consider an application problem in the spirit of \cite{Vora} to illustrate the use of ensembles.  The problem is the two-dimensional heat transfer of a solid medium subject to laser heating from above by a single pulse.  Emulating the thermal conductivity found in \cite{kolev2015} we let $\kappa(T) = 100(T-2.0)^2\mathcal{H}(2.0-T) + 50$ where $\mathcal{H}$ is the Heaviside step function.  Moreover, we set $J = 3$ whereby the initial conditions $T(x,y,0;\omega_{j}) = T_{0,j} = 1.0, 1.25$, and $1.5$ for $1\leq{j}\leq{3}$.  

The lower left corner walls are maintained at temperature $T(1,y,t;\omega_{j}) = T(x,0,t;\omega_{j}) = 1.0$ and upper right corner walls allow for heat flow out of the element via $\kappa \nabla T \cdot n = 1$; see Figure \ref{Figure=domain}.  
% The initial conditions are $T(x,y,0;\omega_{j}) = 1$. 
Moreover, the heat source, $f(x,y,t;\omega_{j})$, is given by
	\[ f(x,y,t;\omega_{j}) = 
	\begin{cases} 
	4000 \exp(-8((x-0.5)^2 + (y-0.5)^2)) & 0 \leq t \leq 0.0005, \\
	0 & 0.0005 < t,
	\end{cases}
	\]
representing a pulse laser with Gaussian beam profile.
\\ \indent The finite element mesh is a division of $(0,1)^{2}$ into $64^{2}$ squares with diagonals connected with a line within each square in the same direction.  We use algorithm (\ref{quan1}) with timestep $\Delta t = 0.00025$ and final time $t^{\ast} = 0.01$.
% Crank-Nicolson is used to generate the first iterate. 
The values for each computed approximate temperature distributions and mean distribution in the $L^{2}(\Omega)$ norm are computed and presented in Figure \ref{Figure=printingex}.  We see that the temperature approximation generated by the unperturbed thermal conductivity and the mean are close as expected.  Moreover, the temperature approximations generated by perturbed thermal conductivities envelop the mean, evidently useful in quantifying uncertainty.

\subsection{Steady State Experiment}
The final numerical experiment is the solution of a two-dimensional steady-state nonlinear heat transfer problem with temperature dependent thermal conductivity as performed in \cite{singh2006numerical}. We use a single ensemble with $J=1$ and initial condition $T(x,y,0) = 100$. The left wall is set at $T(0,y,t) = 200$ and the remaining boundaries are $T(1,y,t)=T(x,0,t)=T(x,1,t)=100$. We use the heat source $f(x,y,t)=0$, and set $\kappa(T) = \frac{\kappa_0}{c\rho}T$ where $\kappa_0=400$ is the reference thermal conductivity, and the specific heat and density are set to be $c=400$ and $\rho = 9000$ respectively. These boundary conditions can be seen in Figure \ref{Figure=domain}. 
Q
The finite element mesh is a uniform division of the domain $(0,1)^2$ into $8^2$ and $16^2$ squares whose diagonals are connected with a line in the same direction for each square. Values of the steady-state solution at each mesh size are approximated and presented with a comparison to the analytical solution given from \cite{singh2006numerical} in Table \ref{Table=stdystate}; from this we can see Algorithm (\ref{quan1}) reproduces the steady-state solution with high accuracy.

\section{Conclusion}
We presented two algorithms for calculating an ensemble of solutions to heat conduction problems with uncertain temperature-dependent thermal conductivity.  In particular, these algorithms required the solution of a linear system, involving a shared coefficient matrix, for multiple right-hand sides at each timestep.  Unconditional stability and convergence of the algorithms were proven.  Moreover, numerical experiments were performed to illustrate the use of ensembles and the proven properties.  Important next steps include allowing for phase changes in the solid material (e.g., liquid phase) and incorporating more physics in the boundary conditions (e.g., surface-to-ambient radiation).

\section*{Acknowledgements}
J. A. Fiordilino and M. Winger are supported by NISE/Section 219.

\section*{Public Release}
Distribution Statement A: Approved for public release; distribution is unlimited. (NSWC Corona Public Release Control Number 21-012).

\section*{References}

\end{document}